\documentclass[11pt]{amsart}
\usepackage[letterpaper, margin=1.5in]{geometry}
\usepackage{amssymb, amsmath, amsthm, amsfonts, mathtools}
\usepackage[english]{babel}
\usepackage{enumitem, comment, graphicx, subcaption, tikz-cd, xcolor}
\usepackage[all]{xy}
\usetikzlibrary{calc,babel}
\usepackage[font=small,labelfont=bf]{caption}
\usepackage[toc,page]{appendix}
\usepackage{float, wrapfig, nicematrix, adjustbox, verbatim, colortbl}
\usepackage{hyperref}
\usepackage{CJKutf8}
\usetikzlibrary{patterns,patterns.meta,decorations.pathreplacing,calc}
\usepackage{scalerel}
\usepackage{csquotes}
\usepackage{mathrsfs}
\usepackage{dutchcal}
\DeclareMathAlphabet{\CMcal}{OMS}{cmsy}{m}{n} 

\theoremstyle{plain}
\newtheorem{theorem}{Theorem}[section]
\newtheorem{lemma}[theorem]{Lemma}

\newtheorem{proposition}[theorem]{Proposition}

\theoremstyle{definition}
\newtheorem{definition}[theorem]{Definition}
\newtheorem{example}[theorem]{Example}

\theoremstyle{remark}
\newtheorem{remark}[theorem]{Remark}

\newcommand{\N}{{\mathbb N}}
\newcommand{\C}{{\mathbb C}}

\newcommand{\X}{{\mathcal X }}

\newcommand{\R}{\mathbb R}

\newcommand{\Z}{\mathbb{Z}}

\newcommand{\fD}{\mathfrak{D}}
\newcommand{\fS}{\mathfrak{S}}

\renewcommand{\log}{\mathrm{log}}

\newcommand{\im}{\mathrm{Im}}

\newcommand{\g}{\mathcal{g}}

\newcommand{\sym}{\mathrm{Sym}^{\!+}}
\newcommand{\Sp}{\mathrm{Sp}}
\newcommand{\cH}{\mathcal{H}}
\newcommand{\tP}{\mathtt{P}}
\newcommand{\tN}{\mathtt{N}}
\newcommand{\tL}{\mathtt{L}}

\newcommand{\SL}{\mathrm{SL}}
\newcommand{\fF}{\mathfrak{F}}

\newcommand{\diam}{\mathrm{diam}}

\newcommand{\sbb}{\text{\begin{CJK}{UTF8}{min}{\scalebox{.7}{\mbox{さ}}}\end{CJK}}BB}
\newcommand{\WP}{\scaleto{W\!P}{3.5pt}}
\newcommand{\trWP}{\scaleto{trWP}{3.5pt}}

\newcommand{\cT}[1]{\mathscr{T}_{#1}^{\scriptscriptstyle{>0}}}
\newcommand{\cTf}[2]{\mathscr{T}_{#1}^{\,{#2}}}

\usepackage[
            backend=biber,
            style=numeric,
            sorting=nyt,
            doi=false,
            url=true,
            isbn=false,
            eprint=false]{biblatex}
\title[Modular interpretation of the Weil--Petersson metric]{Modular interpretation of the Weil--Petersson metric asymptotics for abelian varieties}
\author{Yanbo Fang, Andres Gomez}
\date{}
\begin{document}
\begin{abstract}
   As a first step towards a refined description of the asymptotic of the Weil--Petersson metric on the moduli space of polarized Calabi-Yau manifolds we investigate the concrete case of abelian varieties by linking such asymptotic with the multi-scale collapsing limits of the parametrized flat tori, as explicitly classified by Odaka.   
\end{abstract}
\maketitle
\section{Introduction}
Motivated by the problem of studying the possible \emph{geometric} compactifications of the moduli space of polarized Calabi–Yau manifolds, we explore the relations between the collapsing of the  Ricci flat Kähler metrics along degenerating families of polarized Calabi-Yau manifolds and the asymptotic behavior of the Weil--Petersson metric at infinity on their moduli space. In this paper, we first make progress on this question by investigating the concrete case of principally polarized abelian varieties.

In order to justify our approach, let us elaborate on what we mean by a \emph{geometric compactification}. By Yau's celebrated solution of the Calabi conjecture, for any compact K\"ahler manifold $(X,\omega)$ with $c_1(X)=0$ there exists a \emph{unique} Ricci-flat K\"ahler metric $\omega_{\scriptscriptstyle{CY}}$ in the K\"ahler class $[\omega]$. Therefore, given a polarized degenerating family of Calabi-Yau manifolds $(\mathcal X^*, \mathcal L^*)\rightarrow \Delta^*_t$, as $t\to 0$ we approach the boundary of moduli with a fixed K\"ahler class and the complex structure degenerates. If we denote by $\omega_{\scriptscriptstyle{CY, t}}$ the unique Ricci-flat K\"ahler metric on $X_t$ in the class $2\pi c_1(\mathcal L|_{X_t})$, we say that a compactification is \emph{geometric} if the boundary of the compactified moduli space parametrizes the metric limit of $\omega_{\scriptscriptstyle{CY,t}}$ as $t\to 0$. In the concrete case of principally polarized abelian varieties, where $\omega_{\scriptscriptstyle{CY}}$ is a flat K\"ahler metric on a complex torus, these limits were classified by Odaka in \cite{Odaka2019}. There he explicitly determines the Gromov--Hausdorff limits of principally polarized abelian varieties and provides a moduli theoretic framework to understand such limits.

Following this perspective, the Weil--Petersson metric is a natural candidate for studying the geometry at infinity on the moduli space of polarized Calabi-Yau manifold because, by definition, the Weil--Petersson metric is induced by the variation of the Ricci-flat Kähler metric on the K\"ahler class fixed by the polarization as the complex structure varies (see Section \ref{sec:wp.Ag} for more details); along this lines, it has been proved by Wang \cite{Wang97} and Tosatti \cite{Tosatti15} that on a one-parameter degeneration of polarized Calabi-Yau's the origin lies at finite Weil--Petersson distance if and only if the central fiber of the degeneration is a Calabi-Yau variety with at worst canonical singularities, which implies that the polarized Ricci-flat Kähler metrics have uniformly bounded diameter and are uniformly volume non-collapsed. In contrast to this set-up, the degenerations we are interested are always at infinite Weil--Petersson distance and the Ricci-flat Kähler metric will collapse, phenomena that is already present for abelian varieties as explicitly classified by Odaka.

The main theorem of this paper is the following result that morally says that the asymptotic behavior of the Weil--Petersson metric on the moduli of principally polarized abelian varieties is encoded in the multi-scale collapsing geometry of the (canonical) flat K\"ahler metric along degenerating sequences of abelian varieties. 
\begin{theorem}
\label{thm:main.theorem}
    Let $(A_g,\g_{\WP})$ be the moduli space of complex $g$-dimensional principally polarized abelian varieties endowed with its canonical Kähler structure given by the Weil--Petersson metric. Then for every non-degenerate direction at infinity in $A_g$, there exists an integer $g'$ with $0\leq g'<g$ such that the Weil--Petersson metric converges in the pointed Gromov--Hausdorff sense, to a Weil--Petersson type metric on
    \begin{equation}
        (A_{g'}\times \cT{g-g'},\, \g'_{\WP}+ \g''_{\scaleto{trWP}{3.5pt}}),
    \end{equation}
    where
    \begin{itemize}[leftmargin=2em]
         \item the moduli space $A_{g'}$ is naturally identified with a boundary stratum of the constant-volume Gromov--Hausdorff compactification $\bar{A}_g^{\mathrm{GH}_{\mathrm{vol}}}$ of $A_g$, and $\g'_{\WP}$ denotes the usual Weil--Petersson metric on $A_{g'}$;
         
         \item $\cT{g-g'}$ denotes the moduli space of real $(g-g')$-dimensional flat tori of arbitrary positive diameter, which at any fixed diameter $\cTf{g-g'}{c}$ is naturally realized as a boundary stratum of the constant-diameter Gromov--Hausdorff compactification $\bar{A}_g^{\mathrm{GH}_{\mathrm{diam}}}$ of $A_g$. Furthermore, $\g''_{\scaleto{trWP}{3.5pt}}$ is a ``tropical Weil--Petersson type'' metric on $\cT{g-g'}$, defined via the variation of the limiting flat metrics.
    \end{itemize}
\end{theorem}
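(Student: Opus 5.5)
The plan is to work on the Siegel upper half space $\mathfrak{H}_g=\{Z=X+iY : Y>0\}$ with the $\Sp(2g,\Z)$ action. The first step is to identify $\g_{\WP}$ explicitly. For a principally polarized abelian variety $\C^g/(\Z^g+Z\Z^g)$ the flat metric in the polarization class is determined by $Y^{-1}$, and the variation of complex structure is measured in the $L^2$ sense by harmonic $(0,1)$-forms with values in $T^{1,0}$. A direct computation then shows that $\g_{\WP}$ is, up to a constant, the Siegel metric
\[
\operatorname{tr}\bigl(Y^{-1}dZ\,Y^{-1}d\bar Z\bigr)=\operatorname{tr}\bigl(Y^{-1}dX\,Y^{-1}dX\bigr)+\operatorname{tr}\bigl(Y^{-1}dY\,Y^{-1}dY\bigr).
\]
The question therefore reduces to asymptotics of this locally symmetric metric along directions at infinity.

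\emph{Reduction to Siegel sets.} By reduction theory, every sequence going to infinity in $A_g$ can be brought by $\Sp(2g,\Z)$ into a Siegel set. There we use the Iwasawa/Jacobi decomposition $Y={}^tU\,\mathrm{diag}(D)\,U$, with $U$ unipotent and bounded and $D=\mathrm{diag}(d_1,\dots,d_g)$ satisfying $d_1\lesssim d_2\lesssim\cdots$. A ``non-degenerate direction'' I interpret, following Odaka, as a sequence or ray along which the ratios $d_{i+1}/d_i$ either stay bounded or tend to infinity, and along which the blocks with bounded ratios converge after rescaling. Take $g'$ to be the size of the block of $Y$ that stays bounded. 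Its complement, of size $g-g'$, has $Y''\to\infty$ at a common rate in the blocks, and this block corresponds to the collapsing torus fibers. Write
\[
Z=\begin{pmatrix}Z' & W\\ {}^tW & Z''\end{pmatrix}.
\]
Using the partial Iwasawa coordinates of the maximal parabolic for the split $g=g'+(g-g')$, one decomposes the Siegel metric into three pieces: the $\mathfrak{H}_{g'}$ Siegel metric in $Z'$; a term $\operatorname{tr}(Y''^{-1}dY''\,Y''^{-1}dY'')$, which is the metric on $\sym_{g-g'}$ parametrizing flat real tori $\R^{g-g'}/\Z^{g-g'}$ with Gram matrix $Y''$ of arbitrary scale; and mixed and imaginary-part terms of the form $Y''^{-1}dX''\,Y''^{-1}dX''$ and terms in $dW$ weighted by $Y''^{-1}$.

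\emph{Convergence.} Along the direction we have $Y''\to\infty$, so the terms weighted by $Y''^{-1}$ collapse. Precisely, the $X''$ and $W$ directions range over compact fundamental domains (a torus of translations) whose metric diameter tends to zero. Quotienting by the integral unipotent radical then gives a fibration over $\mathfrak{H}_{g'}\times\sym_{g-g'}$ with fibers of shrinking diameter. Using that pointed GH limits ignore collapsing fibers of vanishing diameter, we obtain pointed GH convergence to $A_{g'}\times(\sym_{g-g'}/\mathrm{GL}(g-g',\Z))=A_{g'}\times\cT{g-g'}$, with product metric $\g'_{\WP}+\g''_{\trWP}$, where $\g''_{\trWP}:=\operatorname{tr}(Y''^{-1}dY'')^2$. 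Finally we check the geometric identifications against Odaka's classification \cite{Odaka2019}. With constant volume, the GH limit of the flat tori is the $A_{g'}$ factor. With constant diameter, it is the flat torus $\R^{g-g'}/\Z^{g-g'}$ with metric $Y''$ rescaled. Moreover, $\operatorname{tr}(Y''^{-1}dY'')^2$ is exactly the $L^2$ variation of these limiting flat metrics, which justifies the name ``tropical Weil--Petersson''.

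\emph{Main obstacle.} I expect the hardest step to be uniformity. One has to show that the mixed terms (those in $dW$ and $dX''$, and the cross terms coming from the unipotent $U$ coupling the two blocks) vanish uniformly on balls of fixed radius around the base points. One also has to show that the discrete group acting near the cusp is, up to the collapsing unipotent lattice, exactly $\Sp(2g',\Z)\times\mathrm{GL}(g-g',\Z)$. Otherwise extra identifications could appear in the limit. For this I would first try the explicit matrix estimates $\|Y''^{-1/2}\,\cdot\,Y''^{-1/2}\|\to0$ on the fiber coordinates, combined with Siegel-set finiteness, which guarantees that only finitely many group elements meet a neighborhood.
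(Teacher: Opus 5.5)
Your outline (Siegel metric, horospherical fibration over the boundary data, fibers of vanishing diameter) matches the paper's strategy. However, there is a genuine gap at the last step, and it is exactly where the non-degeneracy hypothesis must enter.

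Fiber collapse only shows that $R$-balls around $\tau_n$ are Gromov--Hausdorff close to $R$-balls in $A_{g'}\times\cT{g-g'}$ around the \emph{moving} point $\pi_{g'}(\tau_n)=(\tau'_n,t_n)$. But $t_n$ runs off to infinity, since $\lambda_{\min}(t_n)\gtrsim d_{g'+1}(\tau_n)\to\infty$. To get a pointed limit you need two further steps:
\begin{enumerate}
\item[(a)] Observe that $t\mapsto ct$ is an isometry of $\tfrac12\operatorname{tr}(t^{-1}dt\,t^{-1}dt)$ commuting with $\mathrm{GL}(g-g',\Z)$. Then $t_n$ can be replaced by $\bar t_n=t_n/d_g(\tau_n)=L''_n\,\mathrm{diag}\bigl(d_{g'+1}/d_g,\dots,d_{g-1}/d_g,1\bigr)\,{}^tL''_n$.
\item[(b)] Know that $\bar t_n$ converges in $\cT{g-g'}$. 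This is possible only if all ratios $d_i/d_j$, $g'<i<j\le g$, stay bounded away from $0$.
\end{enumerate}
The paper's definition of a non-degenerate direction is exactly this single-scale condition, $d_i/d_j\to k_{ij}\neq 0$.

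Your interpretation allows $d_{i+1}/d_i\to\infty$ inside the diverging block, which is too weak, and the conclusion you claim then fails. Take $g=2$, $g'=0$, $\tau_n=i\,\mathrm{diag}(n,n^2)$. Split $\sym(2,\R)\cong\R\times\mathbb{H}$ isometrically into the $\log\det$ part and the unimodular part. Then $\bar t_n$ climbs the cusp of $\mathrm{PGL}(2,\Z)\backslash\mathbb{H}$ at height $\sim\sqrt n$, and the horocycles collapse as in the $A_1$ example. So the pointed Weil--Petersson limit of $(A_2,\tau_n)$ is the flat $\R^2\cong\cT{1}\times\cT{1}$, not the $3$-dimensional $\cT{2}$. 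Your argument invokes no hypothesis at this step, so it would ``prove'' this false statement as well.

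A secondary but real issue is that the base coordinate must be the Schur complement $t=Y''-{}^tY'''(Y')^{-1}Y'''$, not $Y''$. Under the integral unipotent radical (with $m$ integral),
\[
Y'''\mapsto Y'''+Y'm,\qquad Y''\mapsto Y''+{}^tmY'm+{}^tY'''m+{}^tmY'''.
\]
Hence $(\tau',Y'')$ does not descend to $\tN_{g'}\backslash\fS_g$, whereas $t$ is invariant. Your three-piece splitting of the metric is also exact only in terms of $t$.

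With $t$, the paper shows that $\tau\mapsto(\tau',t)$ is an exact Riemannian submersion onto $\fS_{g'}\times\sym(g-g',\R)$ with the product of the Weil--Petersson and tropical Weil--Petersson metrics. The proof checks this at $iI_g$ and uses $P_{g'}$-equivariance. This is what makes ``pointed GH limits ignore collapsing fibers'' rigorous:
\begin{itemize}
\item the $1$-Lipschitz property plus horizontal lifts give $\pi_{g'}(B(\tau_n,R))=B(\pi_{g'}(\tau_n),R)$;
\item the distance distortion is at most the supremum of fiber diameters over the base ball.
\end{itemize}
The uniformity you flag as the main obstacle is then handled by combining two bounds. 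The explicit vertical-norm formula gives fiber diameter $\le C/\sqrt{\lambda_{\min}(t)}$. The distance formula on $\sym(g-g',\R)$ gives $e^{-\sqrt2 R}\lambda_{\min}(t_n)\le\lambda_{\min}(t)\le e^{\sqrt2 R}\lambda_{\min}(t_n)$ on the base $R$-ball.
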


A few remarks are in order. 

\begin{remark}
    Moduli spaces like $A_g$, that can be realized as locally symmetric spaces, naturally admit several distinct compactification frameworks. Consequently, identifying which of these compactifications carries a modular geometric interpretation is one of the guiding questions of our investigation. Theorem \ref{thm:main.theorem} suggest that the Weil--Petersson metric can indeed shed light on this problem: as the abelian varieties degenerate, their canonical flat metrics collapse, and this collapse is reflected in the asymptotic geometry of the Weil–Petersson metric.
    
\end{remark}

\begin{remark}
    The theorem's formulation requires a precise notion of a \emph{non-degenerate direction at infinity in $A_g$}, which we give in Definition~\ref{def:non.deg.direc.Ag}. Roughly speaking this says that, given a sequence $\tau_n\in A_g$ after choosing lifts $\tau_n=X_n+iY_n\in\mathfrak{S}_g$ contained in a Siegel set, one may assume that $Y_n$ has exactly $g-g'$ eigenvalues tending to infinity, while the remaining $g'$ eigenvalues stay uniformly bounded above and below. Non-degeneracy then amounts to ruling out the possibility that a sub-collection of the diverging eigenvalues separates into different scales, which would force an additional splitting and hence a lower-rank limit. (Cf.\ Theorem~3.1 in \cite{Odaka2019} for an algebraic characterization of this condition).
\end{remark}
 
\begin{remark}
    One immediate consequence of Theorem \ref{thm:main.theorem} is that along any such non-degenerate direction, $A_g$ collapses in the pointed Gromov--Hausdorff sense to the lower--dimen\-sional space $A_{g'}\times\cT{g-g'}$. From a Riemannian viewpoint, this behavior is expected given that it is classical that $A_g$ is a complete non-compact Riemannian orbifold with finite volume.
\end{remark}

The organization of the paper is as follows. In \S2, we introduce the necessary preliminary results and fix the notation and conventions used throughout. We first recall, in \S2.1, the basic theory of moduli of complex abelian varieties and fix conventions for the Siegel upper half space together with the relevant reduction theory. Next, in \S2.2, in the setting of the Siegel upper half-space, we recall the Lie theoretic notion of \emph{boundary component} for a Hermitian symmetric domain and the algebraic and differential-geometric data naturally associated with it. Next, in \S2.3 we revisit the work of \cite{Odaka2019,OO2021} on the Gromov--Hausdorff compactifications of $A_g$. The core of the paper is \S3. In \S3.1 we recall the definition of the Weil--Petersson metric and the standard derivation of its explicit formula on $A_g$, which serves as the starting point for our analysis in the remainder of the paper. Finally In \S3.1 we prove the main theorem after developing the necessary technical preliminaries. 

\textbf{Acknowledgments}. We are deeply grateful to Cristiano Spotti for suggesting this problem and for his constant support, insightful advice, and many stimulating discussions throughout the development of this work. We would also like to express our gratitude to Yuji Odaka, whose pioneering ideas have greatly influenced our investigation; the second author would also like to thank him for his hospitality during a visit to Kyoto University. Finally, the second author thanks Masafumi Hattori for helpful comments and discussions. The authors thank the Villum Foundation for its support through the grant Villum YIP+ 00053062.

\section{Preliminaries}

\subsection{Moduli space of principally polarized abelian varieties.}
\label{mppav}
In this section we recall some classical notions about complex abelian varieties and set up our notation on the Siegel upper half space and the Siegel reduction theory, we follow closely \cite{Chai84}. For a detailed treatment and all of the proofs, we refer to \cite{Namikawa80, Hulek1993}.

Recall that $A_g$, the moduli space of principally polarized abelian varieties  of complex dimension $g$,  is given the structure of complex analytic space by means of the identification
\[
A_g\simeq\Gamma\backslash \fS_g,
\]
where $\fS_g$ is the \emph{Siegel upper half-space}, namely $\fS_g=\{\tau\in \mathrm{Sym}(g, \C)| \,\im(\tau)>0\}$ and $\Gamma=\Sp(2g,\Z)/\{\pm1\}$ is an arithmetic subgroup of the group $\Sp(2,\R)/\{\pm1\}$ of holomorphic automorphism of $\fS_g$, whose action is given by
\[
M=\begin{pmatrix}
    A&B\\
    C&D
\end{pmatrix}:\tau\to M\cdot\tau=(A\tau+B)(C\tau+D)^{-1}, \quad \tau\in\fS_g, M\in\mathcal{G}.
\]

Notice that, as an analytic space, $A_g$ has the structure of a complex orbifold, or V-manifold, due to the presence of abelian varieties with extra automorphism.  

\begin{definition}
Let $n\ge 3$ and let $\Gamma(n)$ denote the principal congruence subgroup of level $n$,
\begin{equation}
\Gamma(n):=\{\,M\in \Sp(2g,\Z)\mid M\equiv I_{2g}\ (\mathrm{mod}\ n)\,\}.
\end{equation}
We define the moduli space of complex $g$-dimensional principally polarized abelian varieties with level-$n$ structure as the quotient
\begin{equation}
A_g(n):=\Gamma(n)\backslash \fS_g .
\end{equation}
For $n\ge 3$, the group $\Gamma(n)$ is torsion-free; hence the action on $\fS_g$ is free and properly discontinuous, and $A_g(n)$ is a smooth complex manifold. Equivalently, the finite cover $A_g(n)\to A_g$ provides uniformizing charts near the orbifold points of $A_g$.
\end{definition}

We now recall some basic notions from Siegel reduction theory that will be used throughout. For a point $\tau=X+iY\in \fS_g$ we denote the \textit{Jacobi decomposition} of $Y$ as $Y=LDL^t$ where 
\[
L=\begin{pmatrix}
    1      &            &        &        & \\
    l_{12} & 1          &        & \text{\Large{0}}& \\
    l_{13} & l_{23}     & 1      &        & \\
    \vdots & \vdots     & \ddots & \ddots & \\
    l_{1g} & l_{2g}     & \cdots & l_{g-1,g} & 1
\end{pmatrix},\quad D=\mathrm{diag}(d_1,\cdots,d_g).
\]

With respect to this decomposition we have that a Siegel sets $\fF_g(u)$ is be given by
\begin{equation}
    \label{eq:sieg.set.def}
    \fF_g(u)=\{ X+ i\, Y \in \fS_g \mid |x_{ij}|<u, |l_{ij}|<u,\, 1<ud_1,\, d_i<ud_{i+1} \text{ for all } i,j  \}.
\end{equation}

Additionally, it is straightforward to check that a subset of the inequalities in \eqref{eq:sieg.set.def} define the Siegel sets  on $\sym(g,\R)$ for the action of the discrete subgroup $\mathrm{GL}(g,\Z)$. More specifically we have that with respect to the Jacobi decomposition $Y=LDL^t$ for a point $Y\in \sym(g,\R)$ a Siegel set $\fF_g^{+}(u)$ is given by
\begin{equation}
    \label{eq:sieg.set.sym}
    \fF^{+}_g(u)=\{Y \in \sym(g,\R) \mid  |l_{ij}|<u,\, 1<ud_1,\, d_i<ud_{i+1} \text{ for all } i,j  \}.
\end{equation}
Expression that will be needed later in our analysis.

Finally let us recall that for a suitable choice of $u\gg1$ the Siegel set $\fF_g(u)$ is a \textit{coarse fundamental domain} for the action of $\Gamma$ in the sense that they satisfy the following properties:
\begin{enumerate}[label=\roman*.]
    \item $\bigcup_{u > 0} \fF_g(u) = \fS_g$.
    \item $\Gamma \cdot \fF_g(u) = \fS_g$ if $u$ is large enough.
    \item For all $u > 0$ the set $\{ \gamma \in \Gamma \mid \gamma \,\fF_g(u) \cap \fF_g(u) \neq \varnothing \}$ is finite.
\end{enumerate}
Analogously, $\fF_g^+(u)$ will be a coarse fundamental domain for the action of $\mathrm{GL}(g,\Z)$.

\subsection{Boundary components of \texorpdfstring{$\fS_g$}.}
\label{subsec:boun.comp.Sg}In this subsection we introduce the Lie-theoretic concepts underlying the notion of boundary component for a Hermitian symmetric domain. Our main goal is to describe the boundary components of the Siegel upper half space and the
algebraic data naturally attached to them. As this material is technical and notation-heavy, we do not provide a comprehensive exposition but introduce only the necessary notions and results that we will need in later sections. For a detailed treatment and all of the proofs, we refer to \cite{Namikawa80}.

Recall that as a Hermitian symmetric space, the Siegel upper half-space is given by $\fS_g=Sp(2g,\R)/\mathrm{U}(g)$. Here, the unitary group $\mathrm{U}(g)$, which is the stabilizer of the basepoint $iI_g\in \fS_g$, is embedded in $\Sp(2g,\R)$ by $A+iB \to \begin{pmatrix} A&B \\ -B&A \end{pmatrix}$. The Harish-Chandra embedding
\[
\Phi:\fS_g \to \fD_g,\quad\Phi(\tau)=(\tau-iI_g)(\tau+iI_g)^{-1},
\]
gives a realization of $\fS_g$ as a bounded domain $\fD_g$ in $\mathrm{Sym}_g(\C)\simeq \C^{g(g+1)/2}$, where
\[
\fD_g=\{Z\in \mathrm{Sym}_g(\C)\, |\, I_g -Z\bar Z>0 \}.
\]
The boundary components of $\fS_g$ are defined as an stratification of the topological closure $\overline\fD_g =\{Z\in \mathrm{Sym}_g(\C)\, |\, I_g -Z\bar Z\geq 0\}$. A distinguished collection of proper boundary components is obtained by fixing $0\leq g' < g$ and requiring
\[
\bigg\{ Z\in \overline\fD_g \,\bigg|\, Z =\begin{pmatrix} Z'&0\\0&I_{g-g'}\end{pmatrix}, \text{ and } Z'\in\fD_{g'} \bigg\}.
\]
Furthermore, each boundary component is a $\Sp(2g,\R)$-translate of one of the \emph{distinguished} boundary components. It is clear that each \emph{distinguished} boundary component can be identified with $\fD_{g'}$ for $0\leq g' < g$, and by abuse of notation we identify them with their corresponding unbounded realization $\fS_{g'}$. 

Associated with each boundary component $\fS_{g'}$ we have its normalizer subgroup $P_{g'}$ in $\Sp(2g,\R)$. Each $P_{g'}$ is a maximal real parabolic subgroup and can be realized as the normalizer of the isotropic-co-isotropic flag $I\subset I^{\perp}$, where $I=\langle e_{g'+1},\dots,e_g\rangle$. Let us fix $g=g'+g''$ throughout, a direct computation will show that $P_{g'}$ consist of matrices 
\[
P_{g'} =
    \left\{ 
        \begin{pmatrix}
        A'& 0 & B' & * \\
        * & u & * & *\\
        C' & 0 & D' & * \\
        0 & 0 & 0 & (u^t)^{-1}
        \end{pmatrix}\!\in \Sp(2g,\R)\, 
        \middle|
        \begin{pmatrix}
            A'&B'\\
            C'&D' 
        \end{pmatrix}\!\in \Sp(2g',\R),\, u \in \mathrm{GL}(g'',\R)
    \right\}.
\]
Moreover, relative to the filtration $I\subset I^{\perp}$ we have the Levi decomposition $P_{g'}=N_{g'}\rtimes L_{g'}$, where $L_{g'}$ is the Levi factor given by the direct product $L=\Sp(I^{\perp}/I)\,\mathrm{GL}(I)=\Sp(2g',\R)\,\mathrm{GL}(g'',\R)$, and $N_{g'}$ is the unipotent radical of $P_{g'}$ which is given in terms of the matrices
\[
N_{g'}=
    \left\{ 
        \begin{pmatrix}
        I_{g'} & 0      & 0     & n \\
        m^t    & I_{g''}& n^t   & b \\
        0      & 0      & I_{g'}& -m\\
        0      & 0      & 0     & I_{g''}
        \end{pmatrix}\, 
        \middle|\,
        n^t m + b = m^t n + b^t
    \right\}.
\]
Finally, following standard terminology we will reefer to $L_{g'h}=\Sp(2g',\R)$ as the Hermitian factor and to $L_{g'l}=\mathrm{GL}(g'',\R)$ as the linear factor. 

From a more geometric perspective it is worth noting that, as differentiable manifolds, the decomposition $P_{g'}\cong N_{g'}\,L_{g'h}\,L_{g'l}$ is a diffeomorphism. Furthermore, this decomposition gives the existence of a diffeomorphism
\begin{equation}
    \label{eq:lang.decom.fs}
    \fS_g\cong P_{g'}/(\mathrm{U}(g)\cap P_{g'})\cong N_{g'}\times \fS_{g'}\times \sym(g'',\R),
\end{equation}
which we call the \emph{horospherical} decomposition of $\fS_g$ relative to $P_{g'}$. To see this we first need to recall that $P_{g'}$ acts transitively on $\fS_{g'}$ and that the Levi factor is stable under the Cartan involution corresponding to $\mathrm{U}(g)$. Then we observe that 
\[
    \mathrm{U}(g)\cap P_{g'}= \left\{ \begin{pmatrix}
                                            A'& 0 & B' & 0 \\
                                            0 & u & 0 & 0\\
                                            -B' & 0 & A' & 0 \\
                                            0 & 0 & 0 & u
                                        \end{pmatrix}
    \in \Sp(2g,\R)\, \middle|\, A'+iB' \in \mathrm{U}(g'),\, u \in \mathrm{O}(g'',\R) \right\},
\]
which identifies the boundary component $\fS_{g'}$ with the symmetric space of $L_{g'h}=\Sp(2g',\R)$ and the cone $\sym(g'',\R)$ with the symmetric space of $L_{g'l}=\mathrm{GL}(g'',\R)$. Finally, with respect to this decomposition we define the map 
\begin{equation}
    \label{eq:pi.gp.map}
    \pi_{g'}:\fS_g\to \fS_{g'}\!\times\sym(g'',\R), \,\tau =\!
\begin{pmatrix}
\tau' & \tau''' \\
^t\tau''' & \tau''
\end{pmatrix} \longmapsto (\tau',Y''-{}^tY'''(Y')^{-1}Y''').
\end{equation}

The geometric significance of the decomposition of Equation \eqref{eq:lang.decom.fs} is that it defines a coordinate system of the quotient $\Gamma \backslash \fS_g$ adapted to the boundary component $\fS_{g'}$. To see this we have the following result that essentially tell us that for a \textit{suitable neighborhood} of $\Gamma\backslash\fS_{g}$ the action of $\Gamma$ is equivalent to the action of $P_{g'}\cap\Gamma$.

\begin{proposition}
    \label{prop:inv.neigh}
     Fix $0\leq g' <g$, and let $\mathtt{P}_{g'}:=\Gamma\cap \tP_{g'}$. Then, there exist a $\tP_{g'}$-invariant neighborhood $\mathcal{U}_{g'}$ of $\fS_{g'}$ in $\fS_{g}$ such that the inclusion $\mathcal{U}_{g'}\hookrightarrow \fS_{g}$ induces a holomorphic embedding 
    \[
    \tP_{g'} \backslash \mathcal{U}_{g'}\hookrightarrow \Gamma\backslash\fS_{g}.
    \]
\end{proposition}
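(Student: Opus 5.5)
We construct $\mathcal{U}_{g'}$ explicitly as a Siegel-type neighborhood of the boundary component and then verify injectivity of the induced map using the finiteness property (iii) of Siegel sets. Concretely, for $c\gg 1$ we take
\[
\mathcal{U}_{g'}=\mathcal{U}_{g'}(c):=\{\tau\in\fS_g \mid \pi_{g'}(\tau)=(\tau',Y_{\perp}),\ Y_{\perp}>c\, h(\tau')\},
\]
where $Y_{\perp}=Y''-{}^tY'''(Y')^{-1}Y'''$ and $h(\tau')$ is a positive quantity controlled by $Y'$ (for instance the largest eigenvalue of $\im\tau'$ plus one). The condition is that the ``linear'' part dominates the ``Hermitian'' part. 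Using the explicit matrix forms of $P_{g'}$ and $N_{g'}$, together with the horospherical decomposition \eqref{eq:lang.decom.fs}, we check that $N_{g'}$ acts by translations in the $N_{g'}$-factor and leaves $\pi_{g'}$ unchanged. The Hermitian factor acts on $\tau'$ and transforms $Y_\perp$ by a factor tied to the automorphy of $\tau'$. The linear factor acts on $Y_\perp$ by $Y_\perp\mapsto uY_\perp u^t$. We therefore need $h$ chosen equivariantly so that $\mathcal{U}_{g'}$ is $\tP_{g'}$-invariant. If this is awkward, we instead define $\mathcal{U}_{g'}$ as the $\tP_{g'}$-saturation of the set $\{\tau\in\fF_g(u): d_{g'+1}>c\}$; invariance then holds by construction, and the content moves to the injectivity step.

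With $\mathcal{U}_{g'}$ invariant, the natural map $\tP_{g'}\backslash\mathcal{U}_{g'}\to\Gamma\backslash\fS_g$ is well defined and holomorphic, since both quotients are taken by holomorphic actions and the map is induced by the inclusion. It is also open, being locally the quotient map. It remains to show it is injective: if $\tau_1,\tau_2\in\mathcal{U}_{g'}$ and $\gamma\tau_1=\tau_2$ with $\gamma\in\Gamma$, then $\gamma\in\tP_{g'}$. Once injectivity holds, properness of the actions gives the embedding statement; in the orbifold sense one checks that stabilizers in $\Gamma$ of points of $\mathcal{U}_{g'}$ lie in $\tP_{g'}$, which is the same claim.

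Injectivity is the main obstacle. We first move $\tau_1,\tau_2$ by elements of $\tP_{g'}$ into a fixed Siegel set $\fF_g(u)$. This is possible because $\tP_{g'}$ contains the upper-triangular integral matrices and $\Sp(2g',\Z)\times\mathrm{GL}(g'',\Z)$, which suffice to bring points into a Siegel set adapted to $P_{g'}$. In that Siegel set, $\tau\in\mathcal{U}_{g'}$ means $d_{g'+1}\gg d_{g'}$, i.e.\ there is a large gap in the Jacobi diagonal at position $g'$.

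By property (iii), only finitely many $\gamma$ satisfy $\gamma\fF_g(u)\cap\fF_g(u)\neq\varnothing$. For each such $\gamma\notin\tP_{g'}$ we use the classical Siegel/Minkowski argument: $\gamma$ would map the isotropic subspace $I=\langle e_{g'+1},\dots,e_g\rangle$ to a different rational isotropic subspace. But the quadratic form $\im(\tau)$, evaluated on the lattice vectors complementary to $I$, is bounded by roughly $d_{g'}$, while on vectors with nonzero $I$-component it is at least roughly $d_{g'+1}$. The short-vector filtration is determined intrinsically by $\tau$ once the gap exceeds a constant depending on $u$. Any $\gamma$ relating two points of the region must therefore preserve this filtration, hence $I$, hence lie in $P_{g'}\cap\Gamma=\tP_{g'}$. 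Choosing $c$ larger than the finitely many constants involved then completes the proof.
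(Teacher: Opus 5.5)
Your route differs from the paper's, which gives no argument and simply invokes the Piatetski-Shapiro cylindrical topology of \cite{AMRT}. As written, though, your proposal has two genuine gaps.

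\textbf{The neighborhood.} You never produce a set that is both $\tP_{g'}$-invariant and has the gap $d_{g'+1}\gg d_{g'}$ that everything afterwards relies on.
Contrary to your description, $Y_\perp=t$ is not rescaled by an automorphy factor. It is unchanged by $\tN_{g'}$ and by the Hermitian factor $\Sp(2g',\Z)$, whereas $h(\tau')=\lambda_{\max}(\im\tau')+1$ is not (for $g'=1$, $\tau'=i\epsilon\mapsto i/\epsilon$). Also, for $g''\ge 2$ the matrix inequality $Y_\perp>c\,h$ is not preserved by $Y_\perp\mapsto u\,Y_\perp{}^tu$, $u\in\mathrm{GL}(g'',\Z)$.

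Your fallback fails outright, because the saturation of $\{\tau\in\fF_g(u): d_{g'+1}>c\}$ imposes no gap. Take $g=2$, $g'=1$ and $c<D_1<D_2<uD_1$. The points $\mathrm{diag}(iD_1,iD_2)$ and $\mathrm{diag}(iD_2,iD_1)$ both lie in this set and are exchanged by $\mathrm{diag}(S,S)\in\Gamma$, with $S$ the coordinate swap. Yet the $\tP_1$-invariant $t=\det Y/y_{11}$ takes the values $D_2\neq D_1$, so $\tP_1\backslash\mathcal U_1\to\Gamma\backslash\fS_2$ is not injective.

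For the same reason, the step ``move $\tau_1,\tau_2$ into $\fF_g(u)$ by $\tP_{g'}$'' is not available in general. For $g''=1$, $t$ is $\tP_{g'}$-invariant while $\fF_g(u)$ forces $t=d_g>d_{g-1}/u$, so a point whose $t$ is small relative to its reduced $\tau'$ can never be moved there.

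What you need is a threshold invariant under both Levi factors. For example, $\min_{0\neq v\in\Z^{g''}}{}^tv\,t\,v>c\,h(\tau')$, with $h$ an $\Sp(2g',\Z)$-invariant function dominating the last Jacobi entry of any reduced representative of $\tau'$, such as $h(\tau')=\max_{\gamma}\det\im(\gamma\tau')$.

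\textbf{The injectivity step.} This step uses the wrong filtration. The short-vector filtration of $\im\tau$ on $\Z^g$ is equivariant only for $\gamma$ with $C=0$. In general $\im(\gamma\tau)={}^t(C\bar\tau+D)^{-1}\,\im\tau\,(C\tau+D)^{-1}$ is not an integral congruence of $\im\tau$; for $g=1$, $\tau\mapsto-1/\tau$ turns the value $y$ into $1/y$. The finite set from property (iii) does contain such elements: $\tau\mapsto-\tau^{-1}$ fixes $iI_g\in\fF_g(u)$. Moreover, a filtration of $\Z^g$ cannot detect the isotropic subspace $I\subset\mathbb{Q}^{2g}$ that $\gamma$ must be shown to preserve.

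The fix is to use the rank-$2g$ period lattice $\Lambda_\tau=\Z^g+\tau\Z^g$ with the flat metric $(\im\tau)^{-1}$.
\begin{itemize}
\item The map $v\mapsto{}^t(C\tau+D)^{-1}v$ is an isometry of $\C^g$ carrying $\Lambda_\tau$ onto $\Lambda_{\gamma\tau}$. In the coordinates $(n_1,n_2)\mapsto n_1+\tau n_2$ it acts on $\Z^{2g}$ by $\epsilon\gamma\epsilon$ with $\epsilon=\mathrm{diag}(I_g,-I_g)$, which stabilises $I$ exactly when $\gamma$ does.
\item On $\fF_g(u)$, the basis $e_{g'+1},\dots,e_g$ of $I\cap\Z^{2g}$ has squared lengths $\le\lambda_{\min}(t)^{-1}\lesssim d_{g'+1}^{-1}$.
\item Every lattice vector outside $I$ has squared length $\gtrsim\min(1,d_{g'}^{-1})$. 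So it is the $I$-vectors that are short, the opposite of your description.
\item Hence once $d_{g'+1}\gg\max(1,d_{g'})$, $I$ is the span of the first $g''$ successive minima, which is intrinsic. Then $\gamma\tau_1=\tau_2$ forces $\gamma I=I$, i.e.\ $\gamma\in\tP_{g'}$, with no need for property (iii).
\end{itemize}
If you define $\mathcal U_{g'}$ directly as the set where this gap occurs and the short span equals $I$, invariance and injectivity become automatic. The only remaining work is the Siegel-set estimate showing that this set contains a neighborhood of $\fS_{g'}$.
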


\begin{proof}
The existence of such a $\mathtt{P}_{g'}$-invariant neighborhood is a consequence of the Piatetskii-Shapiro cylindrical topology (see \cite{AMRT}, Chap.~III, \S 1.). 
\end{proof}

In practice, we can described such invariant neighborhood in terms of the Siegel sets as follows. Fix a $u>0$ such that $(\Gamma \cap L_{g'h}) \cdot \fF_{g'}(u) = \fS_{g'}$, for all $ 1 \leq g' \leq g$ and for $r>0$ sufficiently large, set
\begin{equation}
    \mathcal{U}_{g'}^r:=\left\{\tau\in\fF_g(u)\,|\,d_{g'+1}>r\right\}.
\end{equation}
This will define a family of $\tP_{g'}$-invariant neighborhoods of $\fF_{g'}(u)\subset\fS_{g'}$. 
Moreover, we notice that if we have a sequence $\{\tau_n\}_{n\in\N}$ in $\fF_g(u)$ we conclude that $\tau_n\to\tau'$, for some $\tau'\in \fF_{g'}(u) $ if and only if   
$\pi_{g'}(\tau_n)\to\tau'$ and $d_{g'+1}(\tau_n)\to \infty$. 

The relevance of this result to our analysis is that to study the asymptotic behavior of the Weil--Petersson metric near the boundary components of $\Gamma\backslash\fS_g$ we can instead work on the neighborhoods $\mathcal{U}_{g'}$ modulo the action of the simpler subgroup $\mathtt{P}_{g'}$. Before we proceed, let us introduce the following notation that we will use throughout the paper.

\textbf{Notation:} Fix $0\leq g'< g$, and let $P_{g'}$ be the parabolic subgroup associated with the $\fS_{g'}$ boundary component of $\fS_g$. Then relative to the arithmetic subgroup $\Gamma$ we denote by 
\[
\tP_{g'}=\Gamma\cap P_{g'},\quad \mathtt{N}_{g'}:=\Gamma \cap N_{g},\quad \tL_{g'}=\Gamma\cap L_{g'} \footnote{Notice, that given a general Levi decomposition $P=NL$, $\tL$ is in principly only a finite index subgroup of $\im(\tP\to L)$. However, given that we have fixed the data of the isotropic-co-siotropic flag the Levi decomposition is such that the integral structure of $\tL$ agrees with the one of $\tP$}.
\]
Moreover, for any congruence subgroup $\Gamma(n)$ we have the analogous notation, i.e., $\tP_{g'}(n),\tN_{g'}(n)$, and $\tL_{g'}(n)$.

To understand the quotient $\tP_{g'}\backslash \fS_g$, notice that $N_{g'}$ is a normal subgroup of $P_{g'}$. Thus, we have the exact sequence 
\begin{equation}\label{UPquot}
    1\to N_{g'}\to P_{g'}\to L_{g'}\to 1.
\end{equation}
This suggest that we can compute the quotient $A_{g}\simeq \Gamma\backslash\fS_g$ in two steps. First we take the quotient by $\mathtt{N}_{g'}$, which only acts on the $N_{g'}$ factor. By general theory of arithmetic groups \cite{BHC62} it follows that $\mathtt{N}_{g'}$ is a uniform lattice in $N_{g'}$ and consequently the quotient $N_{g'}/\mathtt{N}_{g'}$ is a compact nilmanifold. Thus, using the decomposition in Equation \eqref{eq:lang.decom.fs} and the map defined in Equation \eqref{eq:pi.gp.map} we get the map
\[
\tN_{g'}\backslash\fS_g\longrightarrow \fS_{g'}\times \sym(g'',\R),
\]
that realizes $\tN_{g'}\backslash\fS_g$ as a trivial fiber bundle over the base $\fS_{g'}\times\sym(g-g',\R)$ and with fibers the compact nilmanifold $\tN_{g'}\backslash N_{g'}$. Next, we take the quotient by $\tL_{g'}$. This has two effects: it replaces the base of the fiber bundle by
\[
\tL_{g'}\backslash\big(\fS_{g'}\!\times\!\sym(g'',\R)\big)\cong \big(Sp(2g',\Z)\backslash\fS_{g'}\big)\!\times\!\big( \mathrm{GL}(g'',\Z)\backslash \sym(g'',\R)\big)
\]
and on the fibers it acts with the stabilizer subgroup of the base-point. Thus, given that in general $\Gamma$ is not torsion-free we might get a flat orbifold structure in the fibers. Now, since in Section \ref{sec:asymp.wpm} we will work with the quotient
\[
\Gamma\backslash\mathcal{U}_{g'}\simeq\tP_{g'}\backslash\mathcal{U}_{g'}\subset\tP_{g'}\backslash\fS_g,
\]
viewed as a Riemannian orbifold endowed with the Weil--Petersson metric, local arguments must a priori be carried out in orbifold charts. To avoid repeatedly invoking this language, we will carry out most of our analysis on the Siegel sets i.e., we will consider instead the map 
\begin{equation}
    \pi_{g'}:\fF_{g}(u)\longrightarrow \fF_{g'}(u)\times\fF_{g''}^+(u)
\end{equation}
with $u\gg 1$ suitably chosen. 

\subsection{Geometric compactifications of \texorpdfstring{$A_g$}.}
\label{sect:geom.comp.Ag}
There are several important compactification frameworks for modular varieties such as $\Gamma\backslash \fS_g$, which are arithmetic quotients of Hermitian symmetric domains. In this section we review two compactifications: the Satake--Baily--Borel (SBB) compactification and the tropical compactification, in the sense of Odaka. We place special emphasis on their differential-geometric interpretation, as developed in \cite{Odaka2019}.

The Satake-Baily-Borel compactification $\bar{A_g}^{\sbb}$ \,\footnote{Following \cite{Odaka2019} and \cite{Namikawa80} we use the \textit{Hiragana} type character ``\begin{CJK}{UTF8}{min}{\scalebox{.7}{\mbox{さ}}}\end{CJK}'' which is pronouced as ``SA'', the first syllable of Satake. we refer to the footnote in \cite{Odaka2019} for a brief explanatory note on the choice of notation.} is defined by the following construction. First, let us define the partial compactification
\[
\fS_g^*:=\fS_g\,\sqcup\,\fS_{g-1}\,\sqcup\,\cdots\sqcup\,\fS_0,
\]
obtained by adjoining the distinguished rational boundary components. This set $\fS_g^*$ is endowed with the Satake topology (see  \cite{AMRT} Chapter III \S $6.1$ for details). The Satake-Baily-Borel compactification is then defined as the quotient $ \bar{A_g}^{\sbb}:=\fS_g^*/\!\sim$,
where $\sim$ is the equivalence relation extending the action of $\Gamma$ on $\fS_g$.

More recently, in \cite{Odaka2019}, with the aim of understanding the relationship between the differential-geometric and the algebro-geometric compactifications of moduli, Odaka explicitly classifies the Gromov--Hausdorff limits of the flat metric along degenerating sequences of complex principally polarized abelian varieties and studies the geometry of the resulting Gromov--Hausdorff compactifications of $A_g$. In particular, the following connection between the Satake-Baily-Borel compactification and the Gromov--Hausdorff compactification at the volume scale is established.

\begin{theorem}[\cite{Odaka2019} Proposition $2.13$]
\label{thm:vol.GH.limits.abelian}
Fix $0\leq g'<g$ and consider a sequence of pointed $g$-dimensional complex principally polarized abelian varieties $(X_n,p_n)$ which is converging to a point of $A_{g'} \subset \partial\bar{A_{g}}^{\sbb}$. Then, the pointed Gromov--Hausdorff limit of  $(X_n,p_n)$ with fixed volume $1$ is isometric to 
\[
(X_\infty\times \mathbb{R}^{g''},(p_\infty,0)),
\] 
where $X_\infty$ is the real $2g'$-dimensional flat torus underlying a complex $g'$-dimensional principally polarized abelian variety and the $\mathbb{R}^{g''}$ is given the flat metric. 
\end{theorem}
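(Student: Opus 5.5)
We would prove the statement directly in Siegel coordinates. By property ii. of the Siegel sets, and since the statement is invariant under isometries of each abelian variety, we may lift the sequence to $\tau_n=X_n+iY_n\in\fF_g(u)$. The flat metric on $X_n=\C^g/(\Z^g+\tau_n\Z^g)$ is then the constant Hermitian form $Y_n^{-1}$, and the lattice is generated by the columns of $I_g$ and of $\tau_n$. Convergence to a point of $A_{g'}\subset\partial\bar{A_g}^{\sbb}$ means, by the discussion after Proposition~\ref{prop:inv.neigh}, that $\pi_{g'}(\tau_n)\to\tau'\in\fF_{g'}(u)$ and $d_{g'+1}(\tau_n)\to\infty$. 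We write $\tau_n$ in block form as in \eqref{eq:pi.gp.map}, so that $Y'_n\to Y'$, $X'_n\to X'$, and the Schur complement $W_n:=Y''_n-{}^tY'''_n(Y'_n)^{-1}Y'''_n$ has all eigenvalues $\gtrsim d_{g'+1}\to\infty$. This uses the Siegel inequalities $d_i<ud_{i+1}$ and the boundedness of $L$.

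The first step is to renormalize to volume $1$. The volume of $(X_n,Y_n^{-1})$ is proportional to $\det(Y_n)^{-1}\cdot\det(Y_n)=$ const for the lattice $\Z^{2g}$. Indeed, $\det$ of the real Gram matrix of the lattice with respect to $\mathrm{Re}(Y^{-1})$ equals $1$ up to a universal constant, because the polarization is principal. So no rescaling is actually needed.

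The second step is a block-diagonalization. We change real coordinates on $\R^{2g}$ using the parabolic element in $P_{g'}$ that kills the off-diagonal block $Y'''_n$, that is, the unipotent part $N_{g'}$ of the horospherical decomposition \eqref{eq:lang.decom.fs}. In these coordinates the metric becomes orthogonally split as $(Y'_n)^{-1}\oplus W_n^{-1}$ on $\C^{g'}\oplus\C^{g''}$. The lattice becomes a lattice $\Lambda_n$ whose projection to the first factor is $\Z^{g'}+\tau'_n\Z^{g'}$, up to translations of bounded size. Along the second factor, the vectors coming from $\Z^{g''}$ (the real directions) have length $O(\|W_n^{-1/2}\|)\to0$. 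The vectors coming from $\tau''_n\Z^{g''}$ have imaginary part $W_n$, so their lengths are of order $\|W_n^{1/2}\|\to\infty$.

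The third step is to compute the limit. The real directions of $\C^{g''}$ form a $g''$-torus of diameter $\to0$, which collapses. The imaginary directions form a lattice with minimal length $\to\infty$, which in the pointed limit unwraps to $\R^{g''}$ with the flat metric; under the metric $W_n^{-1}$ the imaginary part is isometric to $\R^{g''}$. The $\C^{g'}$ factor converges smoothly to $X_\infty=\C^{g'}/(\Z^{g'}+\tau'\Z^{g'})$. We would make this precise by exhibiting explicit $\epsilon_n$-Gromov--Hausdorff approximations on balls $B_R(p_n)$. These are given by the quotient map to $X_\infty\times\R^{g''}$ that forgets the short directions, with additive error bounded by the diameter of the collapsing torus plus the displacement caused by $X'''_n$, $Y'''_n$ and the mixing between the factors.

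The main obstacle is controlling the cross terms. The lattice generated by $\tau_n$ mixes the factors through $\tau'''_n$, which need not converge; only its real part $X'''_n$ is bounded, via $|x_{ij}|<u$. We would handle this by showing that the shear $\tau'''_n$, after the unipotent change of coordinates, becomes a translation of the $\C^{g'}$ factor by a vector that is bounded modulo the lattice, paired with a step of length $\to\infty$ in $\R^{g''}$. Such a translation does not affect the pointed limit on balls of fixed radius $R$, once $n$ is large enough that these steps exceed $2R$.
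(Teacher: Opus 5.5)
The paper gives no proof of this statement; it is quoted from Odaka. So your argument has to stand on its own, and it does: it is a correct, self-contained proof, and your volume computation (the Gram determinant equals $1$, so no rescaling is needed) is right.

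It is worth noticing that you use exactly the machinery the paper later applies to the Weil--Petersson collapse:
\begin{itemize}
\item Your coordinate change is the matrix $P$ of Lemma~\ref{lem:vert.norm.exp}, acting complex-linearly on $\C^g$. It is the element of $N_{g'}$ with $n=b=0$ and $m^t=-{}^tY'''_n(Y'_n)^{-1}$.
\item Your key quantitative input, $\lambda_{\min}(W_n)\ge c\,d_{g'+1}(\tau_n)$, is precisely \eqref{eq:lambdamin-lower}.
\end{itemize}
Your proof therefore makes the duality of Remark~\ref{rmk:sbb.trop.strata} concrete. The same Schur complement $t=W$ controls the collapse of the nilmanifold fibres of $A_g$, at rate $\lambda_{\min}(t)^{-1/2}$. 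Inside each $X_n$ it controls two things at once: the collapse of a real $g''$-torus, at rate $\lambda_{\min}(W_n)^{-1/2}$, and the unwrapping of the imaginary directions, at rate $\lambda_{\min}(W_n)^{1/2}$.

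A few statements should be sharpened.

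\textbf{The lattice after the change of coordinates.} Explicitly, $P$ sends the lattice generators as follows:
\begin{itemize}
\item for $j\le g'$, to $(e_j,b_j)$ and $(\tau'_ne_j,c_j)$, with $b_j,c_j$ real;
\item for $j>g'$, to $(0,e_j)$ and $(\tau'''_ne_j,\,a_j+iW_ne_j)$, with $a_j$ real.
\end{itemize}
Since $W_n^{-1}$ is real, the splitting $\C^{g''}=\R^{g''}\oplus i\R^{g''}$ is orthogonal. Moreover $\Lambda_n\cap(0\oplus\R^{g''})=\Z^{g''}$.

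\textbf{The resulting structure of $X_n$.} $X_n$ is a Riemannian submersion with fibres $\R^{g''}/\Z^{g''}$ of diameter $O(\lambda_{\min}(W_n)^{-1/2})$. The base is the quotient of $X'_n\times\R^{g''}$ by $\Z^{g''}$, where $\beta=W_n^{-1/2}\,\mathrm{Im}\,w''$ is the Euclidean coordinate on $\R^{g''}$ and $m\in\Z^{g''}$ acts by $(z,\beta)\mapsto(z+\tau'''_nm,\,\beta+W_n^{1/2}m)$. Every nontrivial element displaces points by at least $\lambda_{\min}(W_n)^{1/2}$. Hence $R$-balls are isometric to $R$-balls of $X'_n\times\R^{g''}$ once $\lambda_{\min}(W_n)^{1/2}>4R$. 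Your threshold $2R$ only gives injectivity, not isometry.

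\textbf{The projection to the first factor.} The projection of $\Lambda_n$ to $\C^{g'}$ is \emph{not} ``$\Z^{g'}+\tau'_n\Z^{g'}$ up to bounded translations''. It contains $\tau'''_n\Z^{g''}$ and may be dense. What is true is that the image of $\Lambda_n$ in $\C^{g'}\oplus i\R^{g''}$ meets $\C^{g'}$ exactly in $\Z^{g'}+\tau'_n\Z^{g'}$. Your final paragraph effectively uses this correct version.

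\textbf{Minor points.}
\begin{itemize}
\item $Y'''_n=L'_nD'_n\,{}^tL'''_n$ is bounded on the Siegel set, so the whole of $\tau'''_n$ is bounded, not only $X'''_n$. As you note, this does not matter, since translations are isometries of $X'_n$.
\item Homogeneity of flat tori disposes of the base points $p_n$.
\item Convergence in $A_{g'}$ gives $\tau'_n\to\tau'$ only after an $\Sp(2g',\Z)$-change. This is harmless, because only the isometry class of $X'_n$ enters the argument.
\end{itemize}
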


Therefore, as an immediate consequence of this result one obtains that the Satake-Baily-Borel compactification $\bar{A_g}^{\sbb}$ parametrizes the set of pointed Gromov-- Haussdorﬀ limits of complex $g$-dimensional principally polarized abelian varieties with fixed volume, or in other words that this compactification can be differential--geometrically recovered as parametrizing the volume scale limits of the \emph{canonical} flat K\"ahler metric underlying a principally polarized abelian varieties.

On the other hand, the Gromov--Hausdorff limits of principally polarized abelian varieties at the diameter scale are classified as follows.
\begin{theorem}[\cite{Odaka2019} Theorem $2.3$]\label{AV.lim.max}
Fix $0\le g'<g$, and let $X_n$ be any sequence of  complex $g$-dimensional principally polarized abelian varieties converging to a point of $A_{g'}\subset \partial\bar A_g^{\sbb}$. After rescaling so that $\operatorname{diam}(X_n)=1$, the sequence $X_n$ converges, in the Gromov--Hausdorff sense, to a $(g-r)$-dimensional real flat torus of diameter $1$, for some $r$ satisfying $g'\le r<g$. Conversely, every $(g-r)$-dimensional real flat torus of diameter $1$ with $g'\le r<g$ occurs as the Gromov--Hausdorff limit of such a rescaled sequence.
\end{theorem}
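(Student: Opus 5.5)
The plan is to work directly with period matrices. Fix $u\gg1$ and lift $X_n$ to $\tau_n=X_n'+iY_n\in\fF_g(u)$, so that $X_n$ is the flat torus $\C^g/(\Z^g+\tau_n\Z^g)$ with Kähler metric $\mathrm{Re}$ of the Hermitian form $Y_n^{-1}$. Equivalently, the real lattice $\Z^g\oplus\Z^g$ carries the positive definite quadratic form
\[
Q_n(a,b)=(a+X_n'b)^tY_n^{-1}(a+X_n'b)+b^tY_nb .
\]
In the Jacobi coordinates $Y_n=L_nD_nL_n^t$, the Siegel inequalities $d_i<ud_{i+1}$ together with $|l_{ij}|<u$ mean that $Y_n$ is comparable, up to constants depending only on $u$ and $g$, to $D_n$ itself. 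Hence $Q_n$ is uniformly comparable to the diagonal form $\sum_i (a_i+\cdots)^2/d_i+\sum_i d_ib_i^2$.

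Convergence to a point of $A_{g'}\subset\partial\bar A_g^{\sbb}$ means, in the notation of Proposition~\ref{prop:inv.neigh} and the subsequent remark, that $d_{g'+1}(\tau_n)\to\infty$ while $\pi_{g'}(\tau_n)$ converges. In particular $d_1,\dots,d_{g'}$ stay bounded above and below. The main structural claim is then the following.

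The lattice directions $b_{g'+1},\dots,b_g$ have lengths $\asymp\sqrt{d_i}\to\infty$. The directions $a_{g'+1},\dots,a_g$ have lengths $\asymp d_i^{-1/2}\to 0$. All remaining directions stay of order one.

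Next, I will compute the diameter asymptotically. By the comparison above, $\diam(X_n)\asymp\sqrt{d_g}$. After dividing the metric by $\sqrt{d_g}$, every direction with $d_i=o(d_g)$ shrinks to zero, and the collapsing short directions disappear as well.

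Passing to a subsequence, set $r$ to be the largest index with $d_r/d_g\to 0$, or $r=g'$ if there is none. Then $r\ge g'$, since the first $g'$ eigenvalues are bounded while $d_g\to\infty$. Also $r<g$ trivially.

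The GH limit of the rescaled tori should be the quotient of $\R^{g-r}$, with coordinates $b_{r+1},\dots,b_g$, by the lattice $\Z^{g-r}$ equipped with the limiting form $\lim Y''_n/d_g$. Here $Y''_n$ is the lower $(g-r)$ block, or more precisely its Schur-complement version as in \eqref{eq:pi.gp.map}.

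The limit is justified as follows.

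1. The quotient map $X_n\to\R^{2g}/(\text{sublattice})$ killing the short and medium directions is an $\varepsilon_n$-GH approximation, because those fibres are sub-tori of diameter $o(\sqrt{d_g})$. I will use the standard fact that a torus fibration with fibres of diameter $\le\varepsilon$ and a Riemannian submersion to the base is an $\varepsilon$-approximation.

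2. Flat tori of bounded diameter converge in GH exactly when their Gram matrices converge modulo $\mathrm{GL}(\Z)$.

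The scale $1$ is fixed by the diameter normalization. A nondegenerate limit form is guaranteed by the Siegel bounds $d_i<ud_{i+1}$ for $i>r$ together with $|l_{ij}|<u$.

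For the converse I will construct explicit families. Given a flat torus $\R^{g-r}/\Z^{g-r}$ with Gram matrix $G$ normalized to diameter one, take $\tau_n=\mathrm{diag}(\tau',\,i\,t_nI_{r-g'}',\,i\,s_nG)$. Here $\tau'\in\fS_{g'}$ is fixed, the middle block has entries $t_n\to\infty$ with $t_n/s_n\to0$, and $s_n\to\infty$. This sequence tends to $\tau'$ in the Satake topology, and the argument above shows the rescaled limit is exactly $(\R^{g-r}/\Z^{g-r},G)$.

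The main obstacle I expect is step 1 with uniform control in the presence of the off-diagonal terms $X'_n$ and $L_n$. The short directions are twisted by $X'_n$, so I must check that they still form a genuine sub-torus whose quotient is isometric, up to $o(1)$, to the claimed limit torus. I will handle this by applying a unipotent change of lattice basis from $\tN_{g'}$ and $\mathrm{GL}(g'',\Z)$ to put $\tau_n$ in block-reduced form. After that, the cross terms between blocks are bounded relative to the scales $\sqrt{d_i}$ and vanish after dividing by $\sqrt{d_g}$.
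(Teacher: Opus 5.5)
The paper does not prove this statement; it is quoted from \cite{Odaka2019}, so there is no in-text argument to match. Your proof is correct and self-contained. The key points check out:
\begin{itemize}
\item The Siegel bounds give $cD_n\le Y_n\le CD_n$ as quadratic forms. Hence $\diam(X_n)\asymp\sqrt{d_g}$, since a point with $b_g=\tfrac12$ lies at distance $\gtrsim\sqrt{d_g}$ from the lattice.
\item After passing to a subsequence, the bound $d_i<ud_{i+1}$ makes $\{i:\lim d_i/d_g=0\}$ an initial segment $\{1,\dots,r\}$, and $r\ge g'$.
\item The converse family is a metric product, so its rescaled limit can be read off directly, without placing it in $\fF_g(u)$.
\end{itemize}

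The obstacle you anticipate in step 1 does not arise, so the basis change by $\tN_{g'}$ and $\mathrm{GL}(g'',\Z)$ can be dropped. The sublattice $\Lambda^{\mathrm{sh}}=\langle e_{a_1},\dots,e_{a_g},e_{b_1},\dots,e_{b_r}\rangle_\Z$ is a coordinate sublattice, hence primitive whatever $X_n'$ is. Minimizing $Q_n$ over its real span first kills $(a+X_n'b)^tY_n^{-1}(a+X_n'b)$ entirely. It then yields exactly the Schur complement $t^{(r)}_n=L''_nD''_n{}^tL''_n$ for the split at $r$, as in \eqref{eq:pi.gp.map}. So $X_n\to(\R^{g-r}/\Z^{g-r},t^{(r)}_n)$ is a Riemannian submersion with fibres of diameter $O\bigl(1+\max_{i\le r}\sqrt{d_i}\bigr)=o(\sqrt{d_g})$, and no cross terms need controlling.

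Two minor fixes:
\begin{itemize}
\item Your item 2 is false as an ``exactly when'': the collapsing tori of this very theorem are counterexamples. You only use the trivial direction, on the fixed lattice $\Z^{g-r}$ with a positive definite limit, which is fine.
\item You must also pass to a subsequence along which $L''_n$ converges. Nondegeneracy of $\lim t^{(r)}_n/d_g$ then comes from $\lim d_i/d_g>0$ for $i>r$ (the definition of $r$) together with unipotence of $\lim L''_n$. It does not come from the bounds $d_i<ud_{i+1}$, which only bound the ratios from above.
\end{itemize}

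By way of comparison, your argument is the torus-level version of the paper's proof of Theorem \ref{thm:main.second.version}. There, the Schur-complement map $\pi_{g'}$ on $\tP_{g'}\backslash\fS_g$ is a Riemannian submersion whose fibres have diameter $O(d_{g'+1}^{-1/2})$. The one difference is that your splitting index $r$ is set by the diameter scale rather than by the SBB stratum. One has $r=g'$ precisely under the non-degeneracy condition of Definition \ref{def:non.deg.direc.Ag}, so your proof gives a clean explanation of why that hypothesis appears in the paper's main theorem.
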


Odaka goes on to construct $\bar{A_g}^{T}$, which he calls the \emph{tropical geometric compactification} of the moduli space of principally polarized abelian varieties, as the Gromov--Hausdorff compactification of $A_g$ at the diameter scale. As a topological space $\bar{A_g}^T$ is obtained by attaching the moduli space of real flat tori with diameter 1 of dimension $1\leq r \leq g$, 
\begin{equation}
    \label{eq:trop.comp.abel}
    \bar{A_g}^T:=A_g\,\bigsqcup_r\,\mathscr{T}_{r},\qquad\text{where }\,\mathscr{T}_r:=\mathrm{GL}(r,\Z)\backslash\sym(r,\R)/\R_{>0},
\end{equation}
and putting the topology whose basis induces the complex analytic topology in $A_g$ and the Gromov--Hausdorff topology on the boundary. 

One should emphasize that this compactification is no longer an algebraic variety. From our perspective, however, this is a feature rather than a drawback: it is the geometric constraint imposed by requiring the boundary to parametrize limits of the canonical flat K\"ahler metric on a principally polarized abelian variety, which collapse to real manifolds at diameter scale. Furthermore, the resulting limits have a very interesting connection with the algebro-geometric degenerations of principally polarized abelian varieties, as explained by the following result of Odaka.

\begin{theorem}[\cite{Odaka2019} Theorem $3.1$]\label{thm:Oda.holo.family}
Let $\pi^*: (\X^*,\mathcal{L}^*)\to \Delta^*_t$ be a flat projective family of $g$-dimensional principally polarized abelian varieties which extends to some quasi-projective family over $\Delta_t$. Furthermore, we assume that we have semi-abelian reduction over $0\in \Delta_t$ and that the Raynaud extension is the trivial extension. Let $t_n \in \Delta^*_t$ be an arbitrary sequence converging to $0$. Then, the fiber $\X_{t_n}$ with the rescaled flat K\"ahler metric of diameter 1 collapses to a $r$-dimensional real torus where $r$ is the torus rank of $\X_0$. In Particular the collapsing limit is homeomorphic to the dual complex of the degeneration.
\end{theorem}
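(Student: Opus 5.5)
The plan is to reduce the statement to an explicit computation with period matrices. The first step is to normalize the family near $t=0$. Semi-abelian reduction means the monodromy $T$ of $R^1\pi_*\Z$ around $0$ is unipotent with $(T-1)^2=0$. After choosing a symplectic basis adapted to the weight filtration, $T=\begin{pmatrix} I & B\\ 0& I\end{pmatrix}$ with $B\in\mathrm{Sym}(g,\Z)$ positive semidefinite of rank $r$, where $r$ is the torus rank of $\X_0$ (Mumford/Faltings--Chai degeneration data). Conjugating by $\mathrm{GL}(g,\Z)$, I may take $B=\mathrm{diag}(0_{g'},B'')$ with $B''>0$ of size $r=g''$.

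By the nilpotent orbit theorem, or directly from Mumford's construction, a lift of the period map is
\[
\tau(t)=\frac{\log t}{2\pi i}\,B+\psi(t),
\]
with $\psi$ holomorphic on $\Delta_t$. The hypothesis that the Raynaud extension is trivial is used here: the off-diagonal block $\psi'''(0)$ encodes the extension class, so after a translation by an element of $\tN_{g'}$ the block $\tau'''(t)$ stays bounded. Moreover $\tau'(t)\to\psi'(0)\in\fS_{g'}$, which is the abelian part of $\X_0$. Hence along any sequence $t_n\to0$ we get
\[
Y_n=\begin{pmatrix} Y'_n & Y'''_n\\ {}^tY'''_n & \tfrac{-\log|t_n|}{2\pi}B''+O(1)\end{pmatrix}
\]
with $Y'_n,Y'''_n$ bounded and $Y'_n$ bounded below. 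In the Jacobi decomposition this gives $d_1,\dots,d_{g'}$ bounded above and below and $d_{g'+1},\dots,d_g\sim \lambda_n c_j$ with $\lambda_n:=-\log|t_n|/2\pi\to\infty$. In particular, all diverging eigenvalues diverge at the same rate $\lambda_n$, governed by the fixed positive definite matrix $B''$.

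The second step is the metric computation. The flat Kähler metric of $X_{t_n}=\C^g/(\Z^g+\tau_n\Z^g)$ in the principal polarization class is the Hermitian form $Y_n^{-1}$. I split the real $2g$-torus into the "$\Z^g$ directions" and the "$\tau_n\Z^g$ directions". A lattice vector $m\in\Z^g$ has squared length $m^tY_n^{-1}m$, which stays bounded. A vector $\tau_n k$ has squared length $k^tY_nk$, which equals $\lambda_n\,k''^tB''k''+O(\sqrt{\lambda_n}|k|+1)$ after projecting away the bounded part. Thus $\operatorname{diam}(X_{t_n})\asymp\sqrt{\lambda_n}$.

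After rescaling by $\lambda_n^{-1/2}$, I exhibit $X_{t_n}$ as a Riemannian submersion onto the real torus $\R^{r}/\Z^{r}$ with metric $B''$ (suitably normalized to diameter $1$). The fibres are the $\Z^g$-directions together with the $\tau'_n\Z^{g'}$-directions, and their rescaled diameter is $O(\lambda_n^{-1/2})\to0$. The off-diagonal $Y'''_n$ only contributes horizontal shears of size $O(\lambda_n^{-1/2})$ after rescaling. A standard $\epsilon$-net argument, comparing the quotient metric with the projected one, then gives Gromov--Hausdorff convergence to $(\R^r/\Z^r,B'')$. This is consistent with Theorem~\ref{AV.lim.max}, and shows that the case $g'\le r<g$ realized is exactly $r=$ torus rank, with no further splitting of scales.

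The last step is the identification with the dual complex. For a toroidal (Mumford) degeneration with trivial Raynaud extension, the dual complex of $\X_0$ is the Delaunay decomposition of $\R^r$ associated to the quadratic form $B''$, modulo $\Z^r$. Its underlying space is therefore $\R^r/\Z^r$, which is homeomorphic to the limit torus.

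I expect the main obstacle to be the first step: justifying the normal form of $\tau(t)$. Specifically, one has to show that triviality of the Raynaud extension is exactly what allows one to kill the divergent part of the off-diagonal block. Without it, $\tau'''$ could carry a term linear in $\log t$ and mix scales. I would try to obtain this from the Faltings--Chai description of degeneration data, where the extension class is the homomorphism $c:\Z^r\to A^\vee$, and its triviality forces $\tau'''$ to be holomorphic at $0$. The remaining Gromov--Hausdorff estimate is routine once the uniform single-scale growth $Y''_n\sim\lambda_nB''$ is in hand.
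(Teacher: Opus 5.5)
The paper gives no proof of this statement; it is quoted from Odaka, so there is no internal argument to compare against. Your argument is correct: the normal form $\tau(t)=\frac{\log t}{2\pi i}B+\psi(t)$, followed by an explicit computation with the flat metric, does prove it. The single growth rate $d_{g'+j}(\tau_n)\sim c_j\lambda_n$ that you extract is exactly the non-degeneracy condition of Definition~\ref{def:non.deg.direc.Ag}. This is what the introduction's pointer to this theorem refers to.

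\textbf{The role of the Raynaud extension.} You have misidentified where the trivial Raynaud extension enters. Once $B=\mathrm{diag}(0_{g'},B'')$, the monodromy $\tau\mapsto\tau+B$ does not touch the off-diagonal block. So $\tau'''(t)=\psi'''(t)$ is single-valued and holomorphic across $t=0$, and it can never acquire a $\log t$ term. The ``main obstacle'' you anticipate therefore does not exist.

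The extension class is just $\psi'''(0)$ modulo $\Z^{g'\times r}+\psi'(0)\,\Z^{g'\times r}$, which is the $\tN_{g'}$-ambiguity you mention. It has no effect on the limit, so the Gromov--Hausdorff conclusion holds for any Raynaud extension. The hypothesis is at most a convenience in your last step, where it makes the strata of the central fibre products with the abelian part. In that step you should also fix a model, e.g.\ a Mumford/toroidal one with non-simplicial Delaunay cells subdivided, since ``the dual complex'' depends on the choice of model.

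\textbf{A small slip, and a cleaner second step.} The squared length of $\tau_nk$ is ${}^tk\,(X_nY_n^{-1}X_n+Y_n)\,k$, not ${}^tk\,Y_n\,k$. This is harmless, since $X_n$ stays bounded.

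Your second step becomes almost immediate if you write the flat metric in the real coordinates $z=x+\tau y$ as
\[
{}^t(dx+X\,dy)\,Y^{-1}(dx+X\,dy)+{}^tdy\,Y\,dy .
\]
Then $(x,y)\mapsto y''$ is an honest Riemannian submersion onto $(\R^r/\Z^r,S_n)$. Here
\[
S_n:=Y_n''-{}^tY_n'''(Y_n')^{-1}Y_n'''=\lambda_nB''+O(1)
\]
is the Schur complement denoted $t$ in the paper's map $\pi_{g'}$. The fibres of this submersion are the $(x,y')$-tori, and they have uniformly bounded diameter.

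Now multiply the metric by $\lambda_n^{-1}$. The mechanism in the proof of Theorem~\ref{thm:main.second.version} applies: a $1$-Lipschitz submersion whose fibres have diameter $O(\lambda_n^{-1/2})$ is a Gromov--Hausdorff approximation. Combined with $S_n/\lambda_n\to B''$, this gives the limit $(\R^r/\Z^r,B'')$, normalized to diameter $1$, with no separate $\epsilon$-net argument.
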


Notice that the result above says that, if we fix $0\leq g' <g$ and $X_n\in A_g$ is sequence converging to a point $A_{g'}\subset \partial \bar A_g^{\sbb}$ along an holomorphic family, the dimension $r=\dim(X_\infty)$ of the real flat torus appearing as the diameter-rescaled limit is completely determined by the corank $r=g-g'$ of the boundary component $A_{g-r}$. This contrasts with Theorem \ref{AV.lim.max}, which allows the dimension to satisfy $g'\leq r<g$. 

As a consequence of this result, we obtain a correspondence between the boundary strata of the Satake--Baily--Borel compactification and the tropical compactification; more precisely, we relate the differential-geometric data encoded by the volume-scale and diameter-scale Gromov--Hausdorff compactifications of $A_g$ (see the diagram below).
\[
\begin{array}{c}
\textit{Strata of Gromov--Hausdorff compactification at volume scale}\\[0.4em]
\begin{tikzcd}[column sep=1.5em]
A_{g-1} \arrow[d, leftrightarrow, dashed] & \sqcup &
A_{g-2} \arrow[d, leftrightarrow, dashed] & \sqcup &
\cdots & \sqcup &
* \arrow[d, leftrightarrow, dashed]\\
* & \sqcup & \mathscr{T}_{2} & \sqcup & \cdots & \sqcup & \mathscr{T}_{g}
\end{tikzcd}\\[0.4em]
\textit{Strata of Gromov--Hausdorff compactification at diameter scale}\\[0.4em]
\end{array}
\]
This, as Odaka remarks, is a manifestation of the tropicalization phenomenon where the degeneration order is reversed when passing from the algebro-geometric setting to the tropical one. 

\begin{remark}\label{rmk:sbb.trop.strata}
Notice that \emph{a posteriori} one can argue that the duality between the strata of the diameter-scale and volume-scale Gromov--Hausdorff compactifications (equivalently, between the Satake--Baily--Borel and the tropical compactifications) agrees with the Lie-theoretic data of the horospherical decomposition
\[
\fS_g \cong N_{g'} \times \fS_{g'} \times \sym(g-g',\R),
\]
relative to the parabolic subgroup $P_{g'}$ associated with the boundary component $A_{g'}$ to which the sequence $X_n$ converges. At this point we do not claim a deeper structural statement, as the duality was derived solely from the differential-geometric behavior of the canonical flat metrics under degeneration.
\end{remark}
  
\begin{remark}
Odaka further shows that the boundary of $\bar A_g^{T}$ can be described as the dual complex of \emph{any}
toroidal compactification of $A_g$, by identifying $\bar A_g^{T}$ with what he calls the generalized
Morgan--Shalen compactification of $A_g$, denoted $\bar A_g^{\scriptscriptstyle\mathrm{MSBJ}}$
(see the appendix of \cite{Odaka2019} for details). Morally, this provides an invariant framework for
compactifying locally symmetric spaces by attaching the data of the essential skeleton/dual complex
associated with a toroidal compactification. Finally, as shown in \cite{OO2021}, the same compactification
$\bar A_g^{T}$ also recovers one of the compactifications proposed by Satake, namely the Satake
compactification for the adjoint representation, denoted $\bar A_g^{\mathrm{Sat},\tau_{\mathrm{ad}}}$
(see Section~2.2 of \cite{OO2021}).
\end{remark}

\section{Asymptotic behavior of the Weil--Petersson metric}
\label{sec:asymp.wpm}
\subsection{The Weil--Petersson metric on \texorpdfstring{$A_g$}.}
\label{sec:wp.Ag}
We recall the definition of the Weil – Petersson metric and briefly review the standard derivation of its explicit expression on $A_g$, which will be used throughout the sequel. 

We work on the deformation space of a polarized Calabi--Yau manifold. Let $\pi:(\X,\mathcal{L})\to S$ be a proper holomorphic polarized family of Calabi--Yau manifolds of dimension $n$. For each $s\in S$, let $\omega_s$ denote the unique Ricci-flat K\"ahler metric on $X_s$ in the class $2\pi c_1(\mathcal{L}|_{X_s})$. Given $v\in T_sS$, let $\rho_s(v)\in H^1(X_s,T_{X_s})\cong H^{0,1}_{\bar\partial}(X_s,T^{1,0}_{X_s})$ be the Kodaira--Spencer class, and denote by $\mu_v$ its $\omega_s$-harmonic representative. The Weil--Petersson metric is the $L^2$ pairing
\[
\g_{\WP}(v_s,\overline{w}_s) = \int_{X_s}\langle \mu_v,\mu_w\rangle_{\omega_s}\,dV_{\omega_s},
\qquad v,w\in T_sS.
\]
From this, it follows that the Weil--Petersson metric is induced by the variation of the Ricci-flat K\"ahler metric on the corresponding fibers: the norm on the tangent space is given by the norm of the harmonic representatives of the Kodaira-Spencer classes with respect to the Ricci-flat K\"ahler metric on the corresponding fibers according to the Calabi-Yau theorem. In \cite{Tian87,Todorov89} it is shown that $\g_{\WP}$ is indeed a K\"ahler metric and that it has negative sectional curvature.

A useful alternative description is in terms of a K\"ahler potential. As shown in \cite{Tian87}, one has 
\[
\omega_{\scaleto{W\!P}{3.5pt}}
=-\,i\,\partial\bar{\partial}\,
\log \int_{X_s} i^{\,n^2}\,\Omega_s\wedge \overline{\Omega_s},
\]
Here, $\Omega_s$ is a holomorphic section of the relative canonical bundle $K_{\X/S}$, and $\omega_{\scaleto{W\!P}{3.5pt}}$ is the Ricci-form associated with $\g_{\WP}$.

\begin{remark}
This expression identifies $\omega_{\mathrm{WP}}$ with the curvature form of the $L^2$ metric on the Hodge line bundle $\lambda=\pi_*\omega_{\X/S}$.   \end{remark}

We now turn to the case of abelian varieties, where the preceding expression becomes explicit.

\begin{proposition}[cf. Schumacher \cite{SCH85}]
On $A_g$ the Weil--Petersson metric is given in terms of the left-invariant metric on $\fS_g$, as follows
\begin{equation}
    \label{eq:wp.metric}
    \omega_{\scaleto{W\!P}{3.5pt}}=\frac{i}{4}\mathrm{tr}\big(Y^{-1}d\tau Y^{-1}d\bar\tau \big).
\end{equation}
In particular, for abelian varieties the Weil--Petersson metric is determined entirely by the locally Hermitian
symmetric space data of $A_g$.
\end{proposition}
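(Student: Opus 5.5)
The plan is the standard Tian potential computation, specialized to the universal family of abelian varieties over $\fS_g$; in a second step the result is matched with the invariant metric via the Kodaira--Spencer map.

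\textbf{Step 1: the potential.} Over $\fS_g$ take the family $X_\tau=\C^g/(\Z^g+\tau\Z^g)$, with relative holomorphic volume form $\Omega_\tau=dz_1\wedge\cdots\wedge dz_g$. This section is nowhere vanishing and holomorphic in $\tau$. The fiber integral is then
\[
\int_{X_\tau} i^{g^2}\Omega_\tau\wedge\overline{\Omega_\tau}=c_g\,\mathrm{vol}(\C^g/\Lambda_\tau)=c_g\det Y .
\]
To see this, pass to real coordinates $z=x+\tau y$ with $x,y\in[0,1]^g$. The Jacobian of $(x,y)\mapsto z$ gives $|\det(\tau-\bar\tau)|/2^g$, up to a normalizing constant, and this equals $\det Y$ up to a constant.

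Tian's formula then gives
\[
\omega_{\WP}=-i\partial\bar\partial\log\det Y ,
\]
up to a positive constant and a sign convention. A change of holomorphic section multiplies the integrand by $|f(\tau)|^2$, and this does not change $\partial\bar\partial\log$.

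\textbf{Step 2: the Hessian.} Write $Y=(\tau-\bar\tau)/2i$, so that
\[
\partial Y=\tfrac{1}{2i}\,d\tau,\qquad \bar\partial Y=-\tfrac{1}{2i}\,d\bar\tau .
\]
Then
\[
\partial\log\det Y=\mathrm{tr}(Y^{-1}\partial Y),\qquad
\bar\partial\,\mathrm{tr}(Y^{-1}\partial Y)=-\mathrm{tr}(Y^{-1}\bar\partial Y\,Y^{-1}\partial Y).
\]
Substituting, $-i\partial\bar\partial\log\det Y$ becomes a constant multiple of $i\,\mathrm{tr}(Y^{-1}d\tau\,Y^{-1}d\bar\tau)$. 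Tracking the signs of the wedge products and the normalization constant gives the factor $\tfrac{i}{4}$. The result is $\Sp(2g,\R)$-invariant, since $\mathrm{Im}(M\tau)=(C\bar\tau+D)^{-t}Y(C\tau+D)^{-1}$ and $d(M\tau)=(C\tau+D)^{-t}d\tau(C\tau+D)^{-1}$. It therefore descends to $A_g$ and is the symmetric-space metric.

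\textbf{Step 3: consistency with the $L^2$ definition.} As a check, compare with the $L^2$ definition. A tangent vector $\dot\tau\in\mathrm{Sym}(g,\C)$ has Kodaira--Spencer representative the constant form
\[
\mu=\sum \big(Y^{-1}\dot\tau\big)_{\cdot}\,d\bar z\otimes\partial_z ,
\]
up to normalization. This form is harmonic for the flat metric $\omega=\tfrac i2\sum (Y^{-1})_{jk}dz_j\wedge d\bar z_k$, which represents $c_1$ of the principal polarization. Its pointwise norm is $\mathrm{tr}(Y^{-1}\dot\tau Y^{-1}\bar{\dot\tau})$, up to a constant factor.

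\textbf{Main obstacle.} The only genuinely delicate point is bookkeeping constants and signs, in particular the factor $\tfrac14$. Another is whether the volume factor in the $L^2$ definition is normalized away: the volume of the polarization class is $1$ up to $g!$ factors. I expect Step 2 to settle the constant, and I would cite \cite{SCH85} for the normalization.
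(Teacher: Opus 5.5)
Your proposal is correct and is essentially the paper's proof: the same section $\Omega_\tau=dz^1\wedge\cdots\wedge dz^g$, the same fiber integral $2^g\det Y$, and Jacobi's formula applied to Tian's potential. The $\Sp(2g,\R)$-invariance check and the $L^2$ cross-check are extras the paper omits. No citation is needed for the constant, because multiplicative constants drop out of $\partial\bar\partial\log$ and the computation gives exactly $-i\partial\bar\partial\log\det Y=i\,\mathrm{tr}\bigl(Y^{-1}\partial Y\,Y^{-1}\wedge\bar\partial Y\bigr)=\frac{i}{4}\mathrm{tr}\bigl(Y^{-1}d\tau\,Y^{-1}\wedge d\bar\tau\bigr)$.
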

\begin{proof}
    We work on the universal covering space $\fS_g$. There, we have that locally an holomorphic family is given by $\X\to \fS_g$, where $\X:=(\fS_g\times \C^g)/\Z^{2g}$ and $\Z^{2g}$ acts on $\fS\times \C^g$ via 
    \[
    \begin{pmatrix}
        n_1\\
        n_2
    \end{pmatrix}: (\tau,v)\mapsto(\tau, v+ n_1 + \tau\cdot n_2 ). 
    \]
    Equivalently, the torus $X_\tau \cong\R^{2g}/\Z^{2g}$, with differentiable coordinates $(x^i,y^i)$, corresponding to the modular point $\tau\in \fS_g$ has holomorphic coordinates given by the map $(x^i,y^i)\mapsto z^i=x^i + \tau^i_j\,y^j$. 
    
    Thus, an holomorphic section $\Omega_\tau$ of the relative canonical bundle is given by means of the standard translation-invariant holomorphic $g$-form
    \[
    \Omega_\tau=dz^1\wedge\cdots\wedge dz^g.
    \]
    Then, a determinant computation yields
    \[
    i^{g^2}\,\Omega_\tau\wedge\overline\Omega_\tau=2^g\,\det(\im(\tau))\; dx^1\wedge\cdots\wedge dx^g\wedge dy^1\wedge\cdots\wedge dy^g,
    \]
    and consequently,
    \[
    \int_{X_\tau} i^{g^2}\,\Omega_\tau\wedge\overline\Omega_\tau=2^g\,\det(\im(\tau)).
    \]
    Finally, a straightforward computation using Jacobi's formula gives
    \[
    \omega_{\scaleto{W\!P}{3.5pt}}=\frac{i}{4}\mathrm{tr}\big(Y^{-1}d\tau Y^{-1}d\bar\tau \big).
    \]\end{proof}

We conclude with two examples that will be revisited in the next section and will serve to motivate the general framework and results.
\begin{example}
    \label{ex:wp.ellip}
    We now take up the simplest instance of our construction, namely the moduli space of elliptic curves. Let
    \[
      A_1 \;=\; \Gamma\backslash \cH,
      \qquad
      \cH=\{\tau=x+iy\in\C \mid y>0\},
      \qquad
      \Gamma=\SL(2,\Z).
    \]
    Denote by $*_\infty\in A_1$ the cusp at infinity. It is well known that a fundamental system of neighborhoods of $*_\infty$ is given, for $d\gg 1$, by
    \[
      \mathcal U_d \;=\; \{\tau\in\cH \mid \im(\tau)>d\}.
    \]
    Let $P_\infty\subset \SL(2,\R)$ be the parabolic subgroup stabilizing $*_\infty$, namely
    \[
      P_\infty \;=\;
      \left\{
        \begin{pmatrix} a & b \\ 0 & a^{-1}\end{pmatrix}
        \;\middle|\;
        a\in\R^\times,\ b\in\R
      \right\},
    \]
    and set $\tP_\infty \;:=\; \Gamma\cap P_\infty$. Then $\tP_\infty$ is generated by the translation
    \[
      T=\begin{pmatrix}1&1\\0&1\end{pmatrix},
      \qquad
      T\cdot\tau=\tau+1,
    \]
    and, in particular, $\tP_\infty\simeq \Z$.
    Moreover, for $d\gg 1$ the $\Gamma$-action on $\mathcal U_d$ is effectively reduced to the cusp stabilizer: if $\gamma\in\Gamma$ satisfies $\gamma(\mathcal U_d)\cap \mathcal U_d\neq\varnothing$, then $\gamma\in\tP_\infty$.
    Equivalently, the quotient of a sufficiently deep cusp neighborhood is modeled by the parabolic quotient
    \[
      \Gamma\backslash \mathcal U_d \;\cong\; \tP_\infty\backslash \mathcal U_d,
    \]
    so $\Gamma\backslash \mathcal U_d$ is a standard cusp neighborhood in $A_1$. 
    
    Now, for $g=1$, Equation~\eqref{eq:wp.metric} recovers the Weil--Petersson metric on $A_1$ in terms of the standard hyperbolic (Poincar\'e) metric:
    \begin{equation}\label{eq:wp-metric-g1}
      g_{\scaleto{W\!P}{3.5pt}}
      \;=\;
      \frac{1}{2}\frac{|d\tau|^2}{\im(\tau)^2}
      \;=\;
      \frac{1}{2}\frac{dx^2+dy^2}{y^2},
    \end{equation}
    with associated K\"ahler form
    \[
      \omega_{\scaleto{W\!P}{3.5pt}} \;=\; \frac{i}{4}\,\frac{d\tau\wedge d\bar\tau}{\im(\tau)^2}.
    \]
    In particular, at a point $\tau=x+iy$ we have
    \[
      \|\partial_x\|_{\scaleto{W\!P}{3.5pt}}=\frac{1}{2}\frac{1}{y},
      \qquad
      \|\partial_y\|_{\scaleto{W\!P}{3.5pt}}=\frac{1}{y}.
    \]
    Thus, as $y=\im(\tau)\to\infty$ (approaching the cusp), motion in the $x$--direction becomes arbitrarily short: a fixed displacement $\Delta x$ has WP-length $\sim |\Delta x|/y\to 0$. Furthermore, since the $\tP_\infty$-action identifies $x\sim x+n$ for $n\in\Z$, the $x$--direction compactifies to an $\mathbb{S}^1$; combined with the previous estimate, this shows that the $\mathbb{S}^1$ collapses in the limit. A more explicit geometric description of this behavior is obtained via the following coordinate change on $\tP_\infty\backslash\mathcal{U}_d$:
    \[
    r=\log \,y,\qquad \theta=x \:\:(\mathrm{mod}\,\Z).
    \]
    Then \eqref{eq:wp-metric-g1} becomes the standard cusp form
    \begin{equation}\label{eq:wp-cusp-form-g1}
      \g_{\scaleto{W\!P}{3.5pt}}
      \;=\;
      \frac{1}{2}\Big(dr^2 + e^{-2r}d\theta^2\Big).
    \end{equation}
    This makes the collapse explicit: along the end $r\to\infty$ the circle direction $\theta$ is multiplied by the shrinking factor $e^{-r}$, while the base direction $r$ has bounded size. In particular, the diameter of the fiber circles satisfies
    \[
      \mathrm{diam}_{\scaleto{W\!P}{3.5pt}}(\mathbb{S}^1_r)
      =
      \frac{1}{\sqrt{2}}\,e^{-r}
      \;\xrightarrow[r\to\infty]{}\;0.
    \]
    Thus, the asymptotic Weil--Petersson geometry of $A_1$ exhibits a collapsing circle fibration over the ray $r>\log \,d$.

    To give a coordinate-free description of this phenomenon, let us recall from Section~\ref{subsec:boun.comp.Sg} that, relative to the parabolic subgroup $P_\infty$ of the cusp, $\tP_\infty\backslash\mathcal{U}_d~\subset~A_1$ admits the structure of a fibration given by the map
    \begin{equation}
        \pi:\tP_\infty\backslash\cH \to \sym(1,\R)\cong\R^{+},
    \end{equation}
    whose fiber is a compact nilmanifold, here given by $\tN_\infty\backslash N_\infty \cong \mathbb{S}^{1}$. Therefore, the analysis above is simply the explicit coordinate description of the collapsing behavior of this fibration onto its base $\R^+$.
\end{example}

\begin{example}
\label{ex:wpabsurf}
We now turn to investigate the asymptotic Weil--Petersson geometry of the moduli space of principally polarized abelian surfaces. The main point is that this case is rich enough to exhibit the basic structural features of the asymptotics that we aim at capturing in the general setting.

We work on $A_2=\Gamma\backslash\fS_2$ and assume that $\fF_2(u)\subset\fS_2$ is a Siegel set adapted to the distinguished boundary components, namely $\cH$, and $*_\infty$. Given the Jacobi's decomposition $\tau=X +i Y = X + i (L\,D\,L^t)$
\begin{equation}
    L=\begin{pmatrix}
        1&0\\
        l_{21}&1
    \end{pmatrix},\qquad 
    D=\begin{pmatrix}
        d_1&0\\
        0&d_2
    \end{pmatrix},
\end{equation}
and the bounds imposed by the Siegel set $\fF_2(u)$ we have that a convenient set of local coordinates is\footnote{Here we are working with the convention that the coordinates of a point $\tau \in \fS_2$, are given by the embedding
\[\mathrm{Sym}_2(\C)\to \C^3, 
    \begin{pmatrix}
        z_1&z_2  \\
        z_2&z_3 
    \end{pmatrix} \mapsto 
    \begin{pmatrix}
        z_1\\
        z_2\\
        z_3 
    \end{pmatrix}. 
\]}
\begin{equation}
    (\theta_1,\dots,\theta_4)=(x_1,x_2,x_3,l_{21}),\qquad (r_1,r_2)=(\log\,d_1,\log\,d_2).
\end{equation}
With respect to this coordinates we have that the Weil--Petersson metric is given by 
\begin{equation*}
    \begin{aligned}
        \g_{\scaleto{W\!P}{3.5pt}}
        =\frac12\Bigg[
        \,dr_1^2+dr_2^2+2e^{r_1-r_2}\,d\theta_4^{\,2}
        +\frac{\bigl(1+e^{r_1-r_2}\theta_4^{\,2}\bigr)^2}{e^{2r_1}}\,d\theta_1^{\,2}
        +2\,\frac{\bigl(1+2e^{r_1-r_2}\theta_4^{\,2}\bigr)}{e^{r_1}e^{r_2}}\,d\theta_2^{\,2}
        \\[4pt]
        \quad
        +\frac{1}{e^{2r_2}}\,d\theta_3^{\,2}
        -4\theta_4\,\frac{\bigl(1+e^{r_1-r_2}\theta_4^{\,2}\bigr)}{e^{r_1}e^{r_2}}\,d\theta_1\,d\theta_2
        +2\,\frac{\theta_4^{\,2}}{e^{2r_2}}\,d\theta_1\,d\theta_3
        -4\,\frac{\theta_4}{e^{2r_2}}\,d\theta_2\,d\theta_3
        \Bigg].
    \end{aligned}
\end{equation*}

We restrict our analysis to the regime corresponding to the boundary component $\cH$. In terms of our choice of Siegel set data, this means that a sequence $\tau_n\in \fF_2(u)\subset\fS_2$ converges to a point $\tau_{\infty}\in \fF_1(u)\subset\cH$ if and only if $(\tau_n)_1\to \tau_\infty$ and $d_2(\tau_n)\to \infty$. In particular, if we work on the closed metric balls $\overline B_{\WP}(\tau_n,R)$ with $R\gg 1$, then for $n$ sufficiently large we have $d_1\ll d_2$ uniformly on $\overline B_{\WP}(\tau_n,R)$. 

Under the above setup, the Weil--Petersson metric admits the following asymptotic expansion
\begin{equation}
    \label{eq:wp.asymp.surf}
    \g_{\WP}
    = \frac{1}{2}\Bigl( dr_1^2+ e^{-2r_1}d\theta_1^2 \Bigr)
      + \frac{1}{2}\,dr_2^2
      + O\!\bigl(e^{-r_2(\tau_n)}\bigr).
\end{equation}
Heuristically, \eqref{eq:wp.asymp.surf} should be interpreted as saying that, in the direction of the boundary component $\cH$, the
Weil--Petersson geometry decouples into a lower-dimensional hyperbolic factor (in the $(r_1,\theta_1)$-variables)
together with an additional rank-one symmetric space direction (the $r_2$-variable), up to an error that is
uniformly small on bounded $\g_{\WP}$-balls centered at $\tau_n$.

To make this precise, recall that relative to the parabolic subgroup $P_{\cH}$ there is a natural fibration
\begin{equation}
    \label{eq:parabolic.fibration}
    \pi: \tP_{\cH}\backslash\fS_2\longrightarrow \tL_{\cH}\backslash(\cH\times \R^{+}),
    \quad
    \tau\longmapsto\bigl(\tau_1,\ y_3-y_2y_1^{-1}\bigr).
\end{equation}
Endow the total space with the Weil--Petersson metric $g_{\WP}$, and the base with the product metric
\begin{equation*}
\frac{1}{2}\Bigl(g_{\scaleto{\cH}{5pt}}+g_{\scaleto{\R^{+}}{5pt}}\Bigr),
\end{equation*}
where $\tfrac{1}{2}g_{\scaleto{\cH}{5pt}}$ is the Weil--Petersson metric on the lower-dimensional quotient
$\tL_{\cH,h}\backslash\cH$ and $\tfrac{1}{2}g_{\scaleto{\R^{+}}{5pt}}$ is the canonical (suitably normalized) symmetric-space
metric on $\R^{+}$. A direct computation then shows that \eqref{eq:wp.asymp.surf} can be rephrased as 
\begin{equation}
    \label{eq:wp.pullback.base}
    \g_{\WP}
    = \frac{1}{2}\,\pi^*\Bigl(g_{\scaleto{\cH}{5pt}}+g_{\scaleto{\R^{+}}{5pt}}\Bigr)
      + O\!\Bigl(e^{-r_2(\tau_n)}\Bigr)
    \qquad
    \text{on }\ \overline B_{\WP}(\tau_n,R).
\end{equation}
Here the error term is understood in the sense that, once $R>0$ is fixed, the coefficients of the difference tensor
\[
\g_{\WP}-\frac{1}{2}\,\pi^*\Bigl(g_{\scaleto{\cH}{5pt}}+g_{\scaleto{\R^{+}}{5pt}}\Bigr)
\]
are bounded by $C\,e^{-r_2(\tau_n)}$ throughout the closed ball $\overline B_{\WP}(\tau_n,R)$, for some constant $C$ that depends on the Siegel set data, $R$, and $\tau_\infty$. Therefore, \eqref{eq:wp.pullback.base} should be viewed as a \emph{uniform, local} approximation statement:
it compares the two metrics on a bounded $g_{\WP}$-neighborhood of the moving basepoint $\tau_n$.

One could now strengthen \eqref{eq:wp.pullback.base} by estimating the $C^0$-norm of the difference tensor on
$\overline B_{\WP}(\tau_n,R)$ with respect to either of the two metrics, thereby obtaining a genuine
uniform $C^0$-closeness result. However, attention must be paid to the fact that for a fixed $R$, and $n$ the two tensors can not be made arbitrarily close. Instead we need to work for a fixed $R$ and let $\tau_n$ vary as $n\to \infty$. In particular, the space on which the comparison is made changes every time.

We do not pursue this analysis further. Instead, in the next section we will give a characterization of the asymptotics of $\g_{\WP}$ in terms of pointed Gromow-Hausdorff limits, which we
find to be the most natural framework for describing the large-scale geometry in the non-compact setting relevant
to our applications.
\end{example}

Finally, in light of the essential role played by the moduli space of real flat tori and its canonical invariant metric in our analysis of the asymptotic behaviour of the Weil--Petersson metric, we introduce the following
\begin{definition}
    \label{def:trop.wp}
    We define the tropical Weil--Petersson metric $\g_{\trWP}$ on the moduli space of real flat tori of dimension $r$ to be
    \begin{equation}
        \label{eq:trop.wp}
        \g_{\trWP}=\frac{1}{2}tr(P^{-1}\,dP\,P^{-1}\,dP).
    \end{equation}
\end{definition}

Besides $\g_{\trWP}$ having the expected normalization appearing in the Weil--Petersson metric asymtotic in Examples \ref{ex:wp.ellip}, and \ref{ex:wpabsurf} one can give a clear modular interpretation to it as follows. Let $T^r=\R^r/\Z^r$ be the standard smooth torus, and let $P\in\sym(r,\R)$. Then $P$ defines a translation--invariant (hence flat) Riemannian metric $g_P$ on $T^r$ by
\[
g_P=\sum_{i,j=1}^r p_{ij}\,dx_i\,dx_j,\qquad P=(p_{ij}).
\]
Therefore, interpreting a tangent vector $V_P\in T_P\sym(r,\R)$ as the germ of a one parameter family of flat tori and recalling that we have $T_P\sym(r,\R)\cong\mathrm{Sym}(r,\R)$ given by the map 
\[
\Psi_P: T_P\sym(r,\R)\to \mathrm{Sym}(r,\R), V\to P^{-1/2}\,V\,P^{-1/2}
\]
Then, we can interpret $\g_{\trWP}$ as the natural metric measuring the variation of the flat Riemannian metric on the standard $r$-torus.

\subsection{Pointed Gromov-Hasudorff limits of \texorpdfstring{$(A_g,\,\g_{\WP},\tau_n)$}.}
In this section we give a proof of Theorem \ref{thm:main.theorem}. First, we make some notions precise:
\begin{definition}\label{def:non.deg.direc.Ag}
    Set $0\leq g'<g$, and let $\tau_n$ be a sequence in $A_g$, such that $\tau_n\to\tau'$ for some $\tau'\in A_{g'}$. Let $(A_g,\g_{\WP},\tau_n)$ be a sequence of pointed spaces. Then, by the asymptotics of the Weil--Petersson metric in the direction given by the sequence $\tau_n$, we mean the pointed Gromov--Hausdorff limit 
    \[
    (A_g,\g_{\WP},\tau_n)\xrightarrow[]{pG\!H}(Z,d_Z,z).
    \]
    Furthermore, we say that such a sequence defines a non-degenerate direction at infinity if and only if with respect to the Jacobi's decomposition $\tau_n=X_n + i \,L_n\,D_nL_n^t$ the following condition holds
    \begin{equation}
        \lim_{n\to \infty} \frac{d_i(\tau_n)}{d_j(\tau_n)}=k_{ij}\neq 0,\quad \text{for all}\,\, g'<i<j\leq g. 
    \end{equation}
\end{definition}

\begin{remark}
The reason it is natural to describe the asymptotic behavior of the Weil--Petersson metric in terms of pointed Gromov--Hausdorff limits is already suggested by our analysis in Examples~\ref{ex:wp.ellip}, and \ref{ex:wpabsurf}. Since $A_g$ is non-compact, there is in general no background-independent notion of global convergence for the tensor $\g_{\WP}$ on all of $A_g$. Instead, one compares the geometry on larger and larger pointed balls: for a degenerating sequence $\tau_n$, one studies the metric spaces $\bigl(\overline B_{A_g}(\tau_n,R),\g_{\WP}\bigr)$
for fixed $R>0$ and lets $n\to\infty$. Pointed Gromov--Hausdorff convergence is precisely designed to capture this type of local-to-global asymptotic behavior.
\end{remark}

Before we proceed with our general argument we motivate the discussion by giving a detailed computation for the moduli of elliptic curves.

\begin{example}[Gromov--Hausdorff collapse of the moduli of elliptic curves]\hfill \break
    \newline With the notation and set up of Example \ref{ex:wp.ellip} we have the following
    \begin{proposition}
        Let $\tau_n \in \tP_\infty\backslash\mathcal{U}_d \subset A_1$ be a sequence such that $\im(\tau_n)\to \infty$. Then,
        the fibration
        \begin{equation}
            \pi:\tP_\infty\backslash\cH\to \R^+,
        \end{equation}
        collapses to its base, as $\im(\tau_n)\to \infty$, in the pointed Gromov-Hausdroff sense. In particular we have that 
        \begin{equation}
        \label{eq:pGH.conv.A1}
            (A_1, \g_{\WP}, \tau_n) \xrightarrow[]{pG\!M} (\R^{+}_t, dt^2/2t^2,1),
        \end{equation}
        in the pointed Gromov--Hausdorff sense. 
    \end{proposition}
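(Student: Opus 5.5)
We work in the cusp coordinates of Example~\ref{ex:wp.ellip}, where on $\tP_\infty\backslash\mathcal U_d$ the metric has the warped-product form
\[
\g_{\WP}=\tfrac12\bigl(dr^2+e^{-2r}d\theta^2\bigr),\qquad r=\log y,\ \theta=x \ (\mathrm{mod}\ \Z),
\]
and the fibration $\pi$ is $(r,\theta)\mapsto y=e^r$. The base $\R^+_t$ with metric $dt^2/2t^2$ is isometric to $(\R_r,\tfrac12 dr^2)$ via $t=e^{r-r_n}$, once the base point is normalized: the limit base point $1$ corresponds to $r=r_n:=\log\im(\tau_n)$. So the claim amounts to showing that $(A_1,\g_{\WP},\tau_n)$ is pointed Gromov--Hausdorff close to $(\R,\tfrac12dr^2,0)$ after translating $r\mapsto r-r_n$.

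\textbf{Step 1 (balls stay in the cusp).} Fix $R>0$. Since $d_{\WP}(\tau,\tau')\ge \tfrac1{\sqrt2}|r(\tau)-r(\tau')|$ on the cusp region, and $\g_{\WP}\ge\frac{1}{2}dr^2$ there, any path from $\tau_n$ that leaves $\mathcal U_d$ has length at least $\tfrac1{\sqrt2}(r_n-\log d)$. For $n$ large this exceeds $R$. Hence $\overline B_{\WP}(\tau_n,R)\subset \Gamma\backslash\mathcal U_d\cong\tP_\infty\backslash\mathcal U_d$, and on this ball we may use the explicit metric above. This is the only place where Proposition~\ref{prop:inv.neigh}, or its elliptic instance, is needed.

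\textbf{Step 2 (the projection is an $\varepsilon_n$-isometry).} Define
\[
f_n:\overline B_{\WP}(\tau_n,R)\to [-\sqrt2R,\sqrt2R]\subset\R,\qquad f_n=r-r_n .
\]
For two points $p,q$ in the ball we need
\[
\bigl|d_{\WP}(p,q)-\tfrac1{\sqrt2}|r(p)-r(q)|\bigr|\le\varepsilon_n.
\]
The lower bound holds because $\pi$ is $1$-Lipschitz, i.e.\ it is a Riemannian submersion onto $\tfrac12dr^2$. For the upper bound, join $p$ to the point on the fiber circle of $q$ at the same $\theta$ by a vertical segment, which has length $\tfrac1{\sqrt2}|\Delta r|$, then move along that circle, which costs at most
\[
\diam(\mathbb S^1_{r(q)})\le\tfrac1{\sqrt2}e^{-r(q)}\le \tfrac1{\sqrt2}e^{-(r_n-\sqrt2R)}=:\varepsilon_n\to0 .
\]
The image of $f_n$ is $[-\sqrt2R,\sqrt2R]$, which is exactly the limit ball; indeed the vertical geodesic through $\tau_n$ realizes every value. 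So $f_n$ is surjective and $\varepsilon_n$-almost isometric.

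\textbf{Step 3 (conclusion).} Steps 1 and 2 hold for every $R$, so by the definition of pointed Gromov--Hausdorff convergence via $\varepsilon$-isometries of balls we get
\[
(A_1,\g_{\WP},\tau_n)\to(\R,\tfrac12dr^2,0)\cong(\R^+_t,dt^2/2t^2,1).
\]
One should check the isometry: $dt/t=dr$ gives $dt^2/2t^2=\tfrac12dr^2$.

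\textbf{Main obstacle.} The main obstacle is Step 1: we must make sure that metric balls of fixed radius around $\tau_n$ do not see the rest of $A_1$, including the orbifold points and the region outside the Siegel set. Otherwise the explicit warped-product description, and hence the whole fiber-collapse estimate, would not apply on the entire ball. The argument handles this with the $1$-Lipschitz bound on $r$, but it relies on the cusp neighborhood being embedded. Once that is granted, the remaining step is the elementary diameter estimate $e^{-r}\to0$ of the collapsing fibers.
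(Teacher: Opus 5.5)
Your proposal is correct and follows essentially the same route as the paper. The $1$-Lipschitz (Riemannian submersion) property of $\pi$ gives the lower bound on distances, the fiber circles of size $O(e^{-r})$ with $r\ge r_n-\sqrt2R$ give the $O(e^{-r_n})$ upper bound, and the base point is normalized by $r\mapsto r-r_n$ (i.e.\ $t\mapsto e^{-r_n}t$). Your Step~1, which shows that $R$-balls around $\tau_n$ stay inside the embedded cusp neighborhood $\Gamma\backslash\mathcal U_d\cong\tP_\infty\backslash\mathcal U_d$, makes explicit the passage from $\tP_\infty\backslash\cH$ to $A_1$ that the paper leaves implicit.
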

    \begin{proof}
    Although the statement is classical and surely familiar to experts, we provide a proof for completeness and for the convenience of readers who are not accustomed to Gromov--Hausdorff convergence.
    
    As shown in Example \ref{ex:wp.ellip}, it is more convenient to work with the coordinates
    $r=\log y$ and $\theta=x \,(\text{mod }\Z)$ on $\tP_\infty\backslash\cH$, and $s=\log t$ on $\R^{+}$.
    In these coordinates, the Weil--Petersson metric and the base metric take the form
    \[
    \g_{\WP}=\frac{1}{2}\Bigl( dr^2+ e^{-2r}d\theta^2\Bigr), \quad g_{\scriptscriptstyle{\R}}=\frac{1}{2}ds^2.
    \]
    To show that the fibration collapses to its base, it suffices to prove that for any $R>0$ and any $\epsilon>0$
    there exists an $N$ such that, for every $n>N$, the map
    \[
    \pi:\overline B_{\scaleto{\tP_\infty\backslash\cH}{6pt}}(\tau_n,R)\to \overline B_{\R}(s_n,R)
    \]
    is an $\epsilon$-pointed Gromov--Hausdorff approximation. To see this, note first that $\pi$ is a Riemannian
    submersion; in particular, it is $1$--Lipschitz. Moreover, the fiber diameter over the closed ball
    $\overline B_\R(s_n,R)$ admits the uniform bound
    \[
    \diam_{\WP}\bigl(\pi^{-1}(s)\bigr)\leq C_R \,e^{-s_n}, \quad \text{for any } s\in \overline B_{\R}(s_n,R).
    \]
    It follows from this observations that for any $\tau_p,\tau_q \in \overline B_{\scaleto{\tP_\infty\backslash\cH}{6pt}}(\tau_n,R)$ we have
    \[
    \|d_{\WP}(\tau_p,\tau_q)- d_{\scriptscriptstyle{\R}}(s_p,s_q)\|\leq C_R\,e^{-s_n}.
    \]
    Therefore, there exists a constant $C'_R>0$ such that
    \[
    d_{GH}\Bigl(\overline B_{\scaleto{\tP_\infty\backslash\cH}{6pt}}(\tau_n,R),\overline B_{\R}(s_n,R)\Bigr)\leq C'_R\,e^{-s_n}.
    \]
    Consequently, choosing $N>0$ so that $C'_R\,e^{-s_N}\leq \epsilon$ yields the desired pointed
    Gromov--Hausdorff approximation.
    
    Finally, to obtain the pointed Gromov--Hausdorff convergence in \eqref{eq:pGH.conv.A1}, choose $s_0=s_n$. Then the translation
    $s\mapsto s'=s-s_0$ is an isometry of $(\R,g_{\scriptscriptstyle{\R}})$ sending the basepoint $s_n$ to $0$.
    In the $t$-coordinate, this corresponds to the isometry $t\mapsto t'=e^{-s_n}t$, which sends $t_n=e^{s_n}$ to $1$.
    Thus, composing $\pi$ with this translation yields the desired pointed Gromov--Hausdorff approximation.
    \end{proof}
\end{example}

We now proceed with the proof of our main theorem. First let us summarize the set up and notation for our analysis. Throughout, we assume that we are working relative to the (maximal) chain of rational boundary components
\[
    \fS_{g-1} \succ \dots \succ \fS_1 \succ \fS_0,
\]
given by the complete isotropic flag 
\[
    \langle e_g\rangle\subset\langle e_{g-1},e_g\rangle\subset\cdots\subset\langle e_1,\cdots, e_g\rangle,
\]
For each $0\leq g'<g$, let $P_{g'}$ denote the corresponding parabolic subgroup associated with $\fS_{g'}$. We fix the Levi decomposition of $P_{g'}\cong N_{g'}L_{g'}$ associated with the isotropic-co-isotropic flag $I_{g'}\subset I_{g'}^\perp$ such that
\[
\fS_{g}\cong N_{g'}\fS_{g'}\sym(g'',\R),
\qquad
g''=g-g',
\]
and set 
\[
\tP_{g'}:=\Gamma\cap P_{g'},\quad \tN_{g'}:=\Gamma\cap N_{g'},\quad \tL_{g'}:=\Gamma\cap L_{g'}.
\]

We fix a suitable $u>0$ and work with the Siegel sets $\fF_{g'}(u)$, for $0\leq g'\leq g$. Furthermore, we assume they are adapted to the boundary components so that for any sequence $\tau_n\in\fF_g(u)\subset\fS_g$ we have
\begin{equation}\label{eq:sieg.set.direc}
    \tau_n\to \tau'_\infty \in \fF_{g'},\quad \text{if and only if}\quad  d_{g'+1}(\tau_n)\to \infty, \,\text{ and }\, \tau'_n\to \tau'_\infty.
\end{equation}
Here, for any $\tau =X+iY \in \fS_g$ we use the block decomposition   
\begin{equation}
    \tau=\begin{pmatrix}
        \tau'&\tau'''\\
        {}^t\tau'''&\tau''
    \end{pmatrix},
\end{equation}
relative to the boundary component $\fS_{g'}$, that we extend in the obvious way to its real and imaginary part. 

With this notation in place, we turn to a preliminary analysis that will allow for a cleaner presentation of the main ideas in the proof. The key observation, already visible in Examples~\ref{ex:wp.ellip} and~\ref{ex:wpabsurf}, is that the geometry of $A_g$ near a given boundary component is governed by the fibration induced by the horospherical decomposition relative to the associated parabolic subgroup. Crucially, the base of this fibration is the product 
$A_{g'}\times\cT{g''}$---precisely the space appearing in the 
conclusion of Theorem~\ref{thm:main.theorem}. Thus, showing that this fibration collapses to its 
base in the pointed Gromov--Hausdorff sense will establish the theorem. 

Concretely, for each $0 \leq g' < g$, we restrict our analysis to the region
\[
\mathcal{U}_{g'\!,d} := \{\tau \in \fF_g(u) \mid d_{g'}(\tau) > d\},
\]
with $d \gg 1$ sufficiently large. Then, by Proposition~\ref{prop:inv.neigh}, 
for $d$ large enough the quotient $\tP_{g'} \backslash \overline{\mathcal{U}}_{g'\!,d}$ embeds into 
$\Gamma \backslash \fS_g$, so we may work on $\tP_{g'} \backslash \fS_g$.
Finally, using the horospherical decomposition of $\fS_g$ with respect to $P_{g'}$, 
we obtain
\begin{equation}\label{eq:fibration-general}
\pi_{g'} : \tP_{g'} \backslash \fS_g \to \tL_{g'} \backslash 
\big(\fS_{g'} \times \sym(g'', \mathbb{R})\big),
\end{equation}
where $\pi_{g'}(\tau) = (\tau',t)$ with $t = Y'' - {}^tY'''(Y')^{-1}Y'''$.

We can express this more suggestively by recalling that 
\[
\tL_{g'} = \tL_{g'h} \times \tL_{g'l} = \Sp(2g',\Z) \times \mathrm{GL}(g'',\Z).
\]
Thus, Equation~\eqref{eq:fibration-general} becomes
\begin{equation}\label{eq:fibration-modular}
\pi_{g'} : \tP_{g'} \backslash \fS_g \to A_{g'} \times 
\cT{g''}.
\end{equation}
This identification shows that the base of the fibration is precisely the product of 
the corresponding boundary strata in the volume-scale and diameter-scale Gromov--Hausdorff 
compactifications of $A_g$ (cf. Remark~\ref{rmk:sbb.trop.strata} and the diagram on 
page $10$).

From our preliminary analysis, we can restate the main theorem as follows.

\begin{theorem}
\label{thm:main.second.version}
    Let $\tau_n$ be a sequence in $\fF_g(u)\subset\fS_g$ such that
    $\tau_n\to \tau'_\infty\in\fF_{g'}(u)$ for some $0\leq g' <g$.
    This naturally defines a sequence in $\tP_{g'}\backslash \fS_g$, which we keep denoting by $\tau_n$.
    Then the fibration
    \[
        \pi_{g'}: \tP_{g'}\backslash\fS_g \longrightarrow A_{g'}\times\cT{g''},
    \]
    collapses to its base in the pointed Gromov--Hausdorff sense. More precisely, for each fixed $R>0$,
    \begin{equation}
        \label{eq:GH.dis.balls.R}
        d_{GH}\Big(B(\tau_n,R),\, B(\pi_{g'}(\tau_n),R)\Big)
        \le \frac{C_R}{\sqrt{d_{g'+1}(\tau_n)}} \xrightarrow[n\to\infty]{} 0,
    \end{equation}
    for some constant $C_R$ depending only on the Siegel set data, on $R$, and on the limit point $\tau'_\infty$.
    Furthermore, for $n$ sufficiently large, we may (and will) regard $\tau_n$ as a sequence in
    \[
    \tP_{g'}\backslash\overline{\mathcal{U}}_{g'\!,d}\simeq
    \Gamma\backslash\overline{\mathcal{U}}_{g'\!,d}\subset A_g.
    \]
    Assuming moreover that $\tau_n$ satisfies
    \begin{equation}
    \label{eq:lim.didj}
        \lim_{n\to \infty}\frac{d_i(\tau_n)}{d_j(\tau_n)}=k_{i,j}\neq 0
        \quad \text{for all }\,g'+1\leq i<j\leq g,
    \end{equation}
    we obtain
    \begin{equation}\label{eq:pGH.conve.Ag}
        (A_g,\,\g_{\WP},\,\tau_n)\xrightarrow[]{\,\,pG\!H\,\,}
        (A_{g'}\times\cT{g''},\,\g_{\WP} + \g_{\trWP}, \,(\tau'_\infty,t_\infty)).
    \end{equation}
\end{theorem}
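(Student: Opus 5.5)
The plan is to mimic the elliptic curve argument: exhibit $\pi_{g'}$ as a map that is Riemannian-submersion-like up to small error, then bound the diameter of the nilmanifold fibers over a bounded ball. First I will write $\g_{\WP}$ in horospherical coordinates adapted to $P_{g'}$. Set $\tau=\begin{pmatrix}\tau'&\tau'''\\ {}^t\tau'''&\tau''\end{pmatrix}$ and $t=Y''-{}^tY'''(Y')^{-1}Y'''$, and write $Y'''=Y'W$ with $W=(Y')^{-1}Y'''$. The block form of $Y^{-1}$ is
\[
Y^{-1}=\begin{pmatrix}(Y')^{-1}+Wt^{-1}{}^tW & -Wt^{-1}\\ -t^{-1}{}^tW & t^{-1}\end{pmatrix}.
\]
Substituting this into $\frac{i}{4}\mathrm{tr}(Y^{-1}d\tau Y^{-1}d\bar\tau)$ from \eqref{eq:wp.metric} should give a decomposition
\[
\g_{\WP}=\pi_{g'}^*\big(\g'_{\WP}+\g_{\trWP}\big)+h_{N},
\]
where $h_N$ is a nonnegative form involving only the fiber directions $dX'''$, $dX''$ and $dW$. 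Its coefficients are quadratic expressions in $t^{-1/2}$ and $(Y')^{\pm1/2}$. This is the Langlands/horospherical splitting: the $N_{g'}$-orbits are orthogonal to the $L_{g'}$-orbit through a point. One checks that the $\frac{i}{4}\mathrm{tr}$ normalization yields exactly $\frac12\mathrm{tr}(t^{-1}dt\,t^{-1}dt)=\g_{\trWP}$ on the $\sym$ factor, as in Example~\ref{ex:wpabsurf}. Orthogonality together with positivity of $h_N$ shows that $\pi_{g'}$ is a Riemannian submersion onto $\big(\fS_{g'}\times\sym(g'',\R),\g'_{\WP}+\g_{\trWP}\big)$. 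In particular $\pi_{g'}$ is $1$-Lipschitz, and horizontal lifts of base geodesics exist. Both properties descend to the $\tP_{g'}$-quotient.

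Second, I will estimate the fiber diameters. The fiber over $(\tau',t)$ is $\tN_{g'}\backslash N_{g'}$, with fundamental domain $|x_{ij}|<u$, $|w_{ij}|<u$ in Siegel coordinates. On $\fF_g(u)$ with $\tau'$ in a compact neighbourhood of $\tau'_\infty$, $Y'$ is bounded above and below. The smallest eigenvalue of $t$ is comparable, up to constants depending on $u$, to $d_{g'+1}$. Reading off $h_N$, the lengths of the generating loops are therefore at most $C\,d_{g'+1}^{-1/2}$ (the $dW$ and $dX'''$ directions, e.g.\ $\mathrm{tr}((Y')\,dW\,t^{-1}\,d{}^tW)$) or at most $C\,d_{g'+1}^{-1}$ (the $dX''$ directions). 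The fiber diameter is then at most $C d_{g'+1}^{-1/2}$, which explains the exponent in \eqref{eq:GH.dis.balls.R}.

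For uniformity over $B(\tau_n,R)$ I use the Lipschitz property. Along the base $\log$ of the eigenvalues of $t$ moves by at most $O(R)$, since the $\g_{\trWP}$-distance controls $\|\log(t_n^{-1/2}t\,t_n^{-1/2})\|$. Hence $d_{g'+1}$ changes by at most a factor $e^{CR}$, and $\tau'$ stays in a compact set. This is the step I expect to be the main obstacle. Points of the ball may leave the Siegel set $\fF_g(u)$, so the eigenvalue comparison between $t$ and the $d_i$ has to be redone after applying elements of $\tL_{g'}$, i.e.\ in a slightly larger Siegel set $\fF_g(u')$ with $u'=u'(R)$. I would handle this by working with the $\mathrm{GL}(g'',\R)$-invariant quantity $\lambda_{\min}(t)$ instead of $d_{g'+1}$. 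Fiber lengths scale like $\lambda_{\min}(t)^{-1/2}$ times a bounded function of $\tau'$, because $N_{g'}$ is normalized by $L_{g'}$. For this reason the $\tL_{g'}$-action causes no loss.

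With the submersion property and the fiber bound $\delta_n=C_R d_{g'+1}(\tau_n)^{-1/2}$, I argue as in the elliptic example. For $p,q$ in the ball,
\[
d_{\mathrm{base}}(\pi p,\pi q)\le d_{\WP}(p,q)\le d_{\mathrm{base}}(\pi p,\pi q)+2\delta_n,
\]
obtained by horizontally lifting a base geodesic and then correcting within a fiber. Surjectivity onto the base ball follows from horizontal lifts. This gives \eqref{eq:GH.dis.balls.R}. By Proposition~\ref{prop:inv.neigh}, for large $n$ the balls lie in $\tP_{g'}\backslash\overline{\mathcal U}_{g'\!,d}\simeq\Gamma\backslash\overline{\mathcal U}_{g'\!,d}$, so they are balls in $A_g$.

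Finally, for \eqref{eq:pGH.conve.Ag}, the base points $\pi_{g'}(\tau_n)=(\tau'_n,t_n)$ have $\tau'_n\to\tau'_\infty$. Under \eqref{eq:lim.didj}, together with boundedness of the $l_{ij}$ and the Siegel inequalities and after passing to a subsequence, $t_n/d_{g+1-?}$ can be normalized. Concretely, $t_n/\lambda(t_n)$ converges to a positive definite $t_\infty$ for a suitable scalar $\lambda(t_n)$. I will compose with the base isometry induced by the homothety $t\mapsto\lambda(t_n)^{-1}t$, which is an isometry of $\g_{\trWP}$ in the $\mathrm{GL}(g'',\R)$ symmetric-space sense, though only up to the $\R_{>0}$ scaling of $\cT{g''}$. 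This identifies the pointed base with $(A_{g'}\times\cT{g''},(\tau'_\infty,t_\infty))$ up to an error tending to $0$, and concludes the pointed convergence. Non-degeneracy is exactly what prevents $t_n/\lambda(t_n)$ from degenerating to a singular form, which would escape to infinity in $\cT{g''}$.
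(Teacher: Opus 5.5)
Your proposal is correct and follows essentially the same route as the paper. You show that $\pi_{g'}$ is a Riemannian submersion (via the explicit horospherical splitting $\g_{\WP}=\pi_{g'}^*(\g'_{\WP}+\g_{\trWP})+h_N$, where the paper computes at $iI_g$ and uses $P_{g'}$-equivariance), bound fiber diameters by $C/\sqrt{\lambda_{\min}(t)}\le C'/\sqrt{d_{g'+1}}$, make this uniform on $B(\pi_{g'}(\tau_n),R)$ via the log-eigenvalue comparison coming from the distance on $\sym(g'',\R)$, and normalize $t_n$ by a scalar such as $d_g(\tau_n)$ under non-degeneracy.

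One small correction: $\lambda_{\min}(t)$ is only $\mathrm{O}(g'')$-invariant, not $\mathrm{GL}(g'',\R)$-invariant. The clean way to handle points leaving the Siegel set is therefore to take a lift of the base point within distance $R$ of $(\tau'_n,t_n)$ on the cover. There the bound $C(\tau')/\sqrt{\lambda_{\min}(t)}$ still holds, because the $\tN_{g'}$-fundamental domain in the fiber coordinates $(X''',W,X'')$ is bounded independently of $t$.
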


Let us first give a brief overview of the idea of the proof. Our goal is to show three properties of this fibration:
\begin{enumerate}[label=(\roman*)]
\item The projection map $\pi_{g'}$ is length-nonincreasing;
\item Given a metric ball of arbitrary radius on the base, we have a uniform diameter bound on the the fibers depending only on the center and radius of the ball;
\item in the direction of the boundary, the fiber diameters vanish.
\end{enumerate}
These three properties immediately imply that the fibration collapses to its base in 
the pointed Gromov--Hausdorff sense.

\begin{lemma}
    \label{lem:riem.subm.gen}
    Fix $0\leq g'< g$, and equip the total space and base of the fibration 
    \[
    \pi_{g'} : \tP_{g'} \backslash \fS_g \to A_{g'} \times \cT{g''},
    \]
    with the Riemannian structures given, respectively, by $\g_{\WP}$ and by the product metric $\g'_{\WP} + \g''_{\trWP}$. Then, the projection $\pi_{g'}$ is a Riemannian submersion and in particular it is $1$--Lipschitz. 
\end{lemma}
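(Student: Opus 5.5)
The plan is to prove the statement on the universal cover $\fS_g$ and then descend. The map $\pi_{g'}$ of \eqref{eq:pi.gp.map} is equivariant for $P_{g'}\to L_{g'}$. Moreover, $\g_{\WP}$ is $\Sp(2g,\R)$-invariant by \eqref{eq:wp.metric}, and the product metric $\g'_{\WP}+\g''_{\trWP}$ is $L_{g'}$-invariant. So once $\pi_{g'}:\fS_g\to\fS_{g'}\times\sym(g'',\R)$ is shown to be a Riemannian submersion, it remains one after quotienting by $\tP_{g'}$ and $\tL_{g'}$. At orbifold points one argues in the uniformizing charts given by $\Gamma(n)$, $n\ge3$. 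The $1$--Lipschitz property then follows formally: a Riemannian submersion does not increase the length of curves.

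\textbf{Step 1: the fibers are the $N_{g'}$-orbits.} Using the horospherical decomposition \eqref{eq:lang.decom.fs}, I will show that the fibers of $\pi_{g'}$ are exactly the $N_{g'}$-orbits. Concretely, the fiber over $(\tau',t)$ is parametrized by $(X''',X'',Y''')$, where $Y''$ is determined by $t=Y''-{}^tY'''(Y')^{-1}Y'''$. Its real dimension is $2g'g''+g''(g''+1)/2$, which matches $\dim N_{g'}$. Direct inspection of the matrices in $N_{g'}$ acting by $(A\tau+B)(C\tau+D)^{-1}$ shows that they preserve $\tau'$ and $t$.

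\textbf{Step 2: orthogonality of the vertical and horizontal spaces.} I will show that at every point the $\g_{\WP}$-orthogonal complement of the vertical space is the tangent space to the $L_{g'}$-orbit. Since $P_{g'}$ acts transitively and isometrically, it suffices to check this at one point of each $L_{g'}$-orbit, or better at $iI_g$. There the claim reduces to the Lie-algebra statement that $\mathfrak n_{g'}$ and $\mathfrak l_{g'}\cap\mathfrak p$ are orthogonal for the Killing form, with $\mathfrak p$ the $-1$-eigenspace of the Cartan involution. This holds because $\mathfrak l_{g'}$ is $\theta$-stable and $\mathfrak n_{g'}$ is a sum of positive root spaces for the split torus of $\mathrm{GL}(g'')$. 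Translating by $n\in N_{g'}$ moves the base point but preserves both distributions, since $L_{g'}$ normalizes $N_{g'}$.

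As a concrete alternative, one can compute directly with the block factorization
\[
Y={}^tU\begin{pmatrix}Y'&0\\0&t\end{pmatrix}U,\qquad U=\begin{pmatrix}I&(Y')^{-1}Y'''\\0&I\end{pmatrix}.
\]
Substituting into $\tfrac14\mathrm{tr}(Y^{-1}dX\,Y^{-1}dX)+\tfrac14\mathrm{tr}(Y^{-1}dY\,Y^{-1}dY)$ and collecting terms, the cross terms between $d\tau',dt$ and the fiber differentials should cancel. These fiber differentials are $dX''',dX''$ and $d\big((Y')^{-1}Y'''\big)$, with $dX''$ suitably corrected by $dX'''$ and $dX'$.

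\textbf{Step 3: horizontal isometry and the normalization.} Restricted to the $L_{g'}$-orbit through a point with $X'''=X''=Y'''=0$, the metric \eqref{eq:wp.metric} becomes block diagonal:
\[
\tfrac{i}{4}\mathrm{tr}\big(Y'^{-1}d\tau'\,Y'^{-1}d\bar\tau'\big)+\tfrac14\mathrm{tr}\big(t^{-1}dt\,t^{-1}dt\big).
\]
The first term is $\g'_{\WP}$, and $\pi_{g'}$ is the identity in these coordinates. So horizontal vectors are mapped isometrically, provided the second term agrees with $\g''_{\trWP}$.

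I expect this normalization check against Definition~\ref{def:trop.wp} to be the main obstacle. The factor in \eqref{eq:trop.wp} is $\tfrac12$, while the symmetric-space reduction above naively produces $\tfrac14$, depending on the convention for $\tfrac{i}{4}\mathrm{tr}(\cdots)$ as a Kähler form versus as a Riemannian metric. I would fix the constant by comparing with the case $g=1$ in Example~\ref{ex:wp.ellip}, where the base metric must be $\tfrac12\,dt^2/t^2$. Then I would verify that the same convention applied to the $t$-block gives exactly $\tfrac12\mathrm{tr}(t^{-1}dt\,t^{-1}dt)$.
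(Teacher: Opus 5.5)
Your argument is essentially the paper's. Both reduce, by $P_{g'}$-equivariance and transitivity, to the point $iI_g$. There $\ker d\pi_{g'}=\{V_X'=V_Y'=V_Y''=0\}$, its orthogonal complement is $\{\mathrm{diag}(V_X'+iV_Y',\,iV_Y'')\}$ (your $\mathfrak l_{g'}\cap\mathfrak p$), and the norm on it is the product norm. The only difference is that the paper checks the orthogonality by block trace identities rather than the Killing form. The normalization is not an obstacle either. With the convention of Example~\ref{ex:wp.ellip}, the Riemannian metric is $\tfrac12\mathrm{tr}(Y^{-1}dX\,Y^{-1}dX)+\tfrac12\mathrm{tr}(Y^{-1}dY\,Y^{-1}dY)$, i.e.\ $\tfrac12(\mathrm{tr}V_X^2+\mathrm{tr}V_Y^2)$ at $iI_g$. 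So your $\tfrac14$'s should be $\tfrac12$'s, and the $t$-block is exactly $\g_{\trWP}$.

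You should drop the global claim in Step 2, because it is false. The horizontal space is tangent to the $L_{g'}$-orbit along the slice $L_{g'}\cdot iI_g=\{X'''=X''=Y'''=0\}$. But $n\in N_{g'}$ carries it to the tangent space of the $nL_{g'}n^{-1}$-orbit, not the $L_{g'}$-orbit, since $L_{g'}$ normalizes $N_{g'}$ and not conversely. Already for $g=1$ at $\tau=1+i$, the dilation orbit has tangent $\partial_x+\partial_y$, which is not $\g_{\WP}$-orthogonal to the fiber direction $\partial_x$. What you actually need is only that each $n\in N_{g'}$ is an isometry with $\pi_{g'}\circ n=\pi_{g'}$. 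That transports the isometry of $d\pi_{g'}$ on horizontal spaces from the slice to every point.

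Your coordinate alternative has a similar slip: $dX'''$ must also be corrected, to $dX'''-dX'\,(Y')^{-1}Y'''$. Otherwise genuine $dX'\,dX'''$ cross terms survive, as in the $d\theta_1\,d\theta_2$ term of Example~\ref{ex:wpabsurf}. With that coframe the metric is block diagonal and its horizontal block is $\pi_{g'}^*(\g'_{\WP}+\g''_{\trWP})$ at every point. This gives a global proof that bypasses the homogeneity reduction.
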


\begin{proof}
Since being a Riemannian submersion is local property we may work on the universal covers \footnote{Recall that, by a harmless abuse of notation, we denote by $\pi_{g'}$ both the projection on the universal covers and the induced projection on the quotients.}
\[
\pi_{g'}:\fS_g\longrightarrow \fS_{g'}\times \sym(g'',\R).
\]
Furthermore, since $P_{g'}$ acts transitively and by isometries and $\pi_{g'}$ is $P_{g'}$-equivariant, it suffices to verify the claim at the base point $iI_g\in\fS_g$. Consider the differential
\[
(\pi_{g'*})_{iI_g}:\,T_{iI_g}\fS_g \longrightarrow
T_{(iI_{g'},\,I_{g''})}\bigl(\fS_{g'}\times \sym(g'',\R)\bigr).
\]
Using the identification $T_{iI_g}\fS_g\simeq \mathrm{Sym}(g,\C)$, write a tangent vector as
\[
V=V_X+iV_Y,\qquad V_X,V_Y\in\mathrm{Sym}(g,\R),
\]
and decompose into blocks according to $g=g'+g''$:
\[
V_X=
\begin{pmatrix}
V_X' & V_X'''\\
{}^tV_X''' & V_X''
\end{pmatrix},
\qquad
V_Y=
\begin{pmatrix}
V_Y' & V_Y'''\\
{}^tV_Y''' & V_Y''
\end{pmatrix}.
\]

A direct computation yields
\[
(\pi_{g'*})_{iI_g}(V)=\bigl(V_X'+iV_Y',\,V_Y''\bigr),
\]
so $(\pi_{g'*})_{iI_g}$ is surjective and its kernel is
\[
\ker\,(\pi_{g'*})_{iI_g}
=
\{\,V\in T_{iI_g}\fS_g:\ V_X'=0,\ V_Y'=0,\ V_Y''=0\,\}.
\]
At $iI_g$ the Weil--Petersson metric gives
\[
\|V\|^2_{iI_g}=\frac{1}{2}\bigl(tr(V_X^2)+tr(V_Y^2)\bigr),
\]
and the block trace identities give
\[
tr(V_X^2)=tr\big((V_X')^2\big)+2tr\big(V_X'''\,{}^tV_X'''\big)+tr\big((V_X'')^2\big),
\]
and similarly for $V_Y$. It follows that the orthogonal complement of the kernel is precisely
\[
\ker\,(\pi_{g'*})_{iI_g}^\perp
=
\left\{
V\in T_{iI_g}\fS_g:\ 
V=
\begin{pmatrix}
V_X'+iV_Y' & 0\\
0 & iV_Y''
\end{pmatrix}
\right\}.
\]
For such $V$ one has
\[
\|V\|^2_{iI_g}
=
\frac{1}{2}\bigl(tr\big((V_X')^2\big)+tr\big((V_Y')^2\big)+tr\big((V_Y'')^2\big)\bigr),
\]
which is exactly the product metric on
$T_{(iI_{g'},\,I_{g''})}\bigl(\fS_{g'}\times\sym(g'',\R)\bigr)$.
Therefore $(\pi_{g'*})_{iI_g}$ restricts to an isometry
\[
\ker\bigl((\pi_{g'*})_{iI_g}\bigr)^\perp
\;\xrightarrow{\ \simeq\ }\;
T_{(iI_{g'},\,I_{g''})}\bigl(\fS_{g'}\times\sym(g'',\R)\bigr),
\]
and hence $\pi_{g'}$ is a Riemannian submersion.
\end{proof}

As a first step in obtaining a uniform bound on the fiber diameter, we derive in the 
following two lemmas an explicit bound on the norm of vertical tangent vectors in 
terms of the base point $(\tau', t)$.

\begin{lemma}
    \label{lem:vert.norm.exp}
    Let $\fF_{g}(u)\subset\fS_{g},\fF_{g'}(u)\subset\fS_{g'}$, and $\fF^{+}_{g''}(u)\subset\sym(g'',\R)$ be fixed Siegel sets and let $(\tau',t)\in \fF_g(u)\times\fF_{g''}(u)$ be a fixed point in the base of fibration suitably restricted to
    \[
    \pi_{g}:\fF_g(u)\longrightarrow\fF_{g'}(u)\times\fF^{+}_{g''}(u).
    \]
    let $V\in T_\tau \fF_g(u)$, be a vertical tangent vector at $\tau\in \pi^{-1}_{g'}(\tau',t)\cap \fF_g(u)$. Then, its norm satisfies the identity
    \begin{equation}
    \label{eq:normvert}
        \|V\|_{\tau}^2
        =
        \,tr\!\bigl(t^{-1}\,{}^tV_X'''(Y')^{-1}V_X'''\bigr)
        +
        \,tr\!\bigl(t^{-1}\,{}^tV_Y'''(Y')^{-1}V_Y'''\bigr)
        +
        \frac{1}{2}tr\!\bigl(t^{-1}\widetilde V_X''\,t^{-1}\widetilde V_X''\bigr),
    \end{equation}
    where $\widetilde V_X''$ is defined below in Equation \eqref{eq:v.tilde.pp}.
\end{lemma}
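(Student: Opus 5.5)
The plan is to compute $\|V\|_\tau^2$ directly from the explicit formula \eqref{eq:wp.metric}, after block-diagonalising $Y$ by a unipotent change of frame adapted to the horospherical decomposition. As in the proof of Lemma~\ref{lem:riem.subm.gen}, write $V=V_X+iV_Y$ with real symmetric blocks. With the normalisation of \eqref{eq:wp.metric} one has
\[
\|V\|^2_\tau=\tfrac12\bigl(tr(Y^{-1}V_XY^{-1}V_X)+tr(Y^{-1}V_YY^{-1}V_Y)\bigr),
\]
which at $Y=I_g$ is exactly the expression used in Lemma~\ref{lem:riem.subm.gen}. No Siegel set bounds are needed for the identity itself; the Siegel sets only matter later, when \eqref{eq:normvert} is turned into estimates.

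\textbf{Step 1 (block Cholesky).} Set $W:=(Y')^{-1}Y'''$ and $Q:=\begin{pmatrix} I_{g'} & W\\ 0 & I_{g''}\end{pmatrix}$. A direct check gives
\[
Y={}^tQ\,\mathrm{diag}(Y',t)\,Q,\qquad t=Y''-{}^tY'''(Y')^{-1}Y''',
\]
so that $Y^{-1}=Q^{-1}\mathrm{diag}\bigl((Y')^{-1},t^{-1}\bigr)\,{}^tQ^{-1}$. By cyclicity of the trace,
\[
tr(Y^{-1}UY^{-1}U)=tr\bigl(D^{-1}\widetilde U D^{-1}\widetilde U\bigr),\qquad D=\mathrm{diag}(Y',t),\quad \widetilde U:={}^tQ^{-1}U Q^{-1},
\]
for any real symmetric $U$, in particular for $U=V_X$ and $U=V_Y$.

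\textbf{Step 2 (verticality).} Next I would characterise vertical vectors. Differentiating $\pi_{g'}(\tau)=(\tau',t)$ shows that $V$ is vertical if and only if $V_X'=V_Y'=0$ and $dt(V)=0$. Since $Y'$ is fixed along the fibre, the last condition reads
\[
V_Y''-{}^tV_Y'''W-{}^tW V_Y'''=0 .
\]
Now compute $\widetilde U$ for $U=\begin{pmatrix}0&U'''\\ {}^tU'''&U''\end{pmatrix}$ using $Q^{-1}=\begin{pmatrix}I&-W\\0&I\end{pmatrix}$. The result is
\[
\widetilde U=\begin{pmatrix}0&U'''\\ {}^tU'''&U''-{}^tWU'''-{}^tU'''W\end{pmatrix}.
\]
Accordingly I would define
\begin{equation}\label{eq:v.tilde.pp}
\widetilde V_X'':=V_X''-{}^tY'''(Y')^{-1}V_X'''-{}^tV_X'''(Y')^{-1}Y'''.
\end{equation}
The verticality condition says precisely that the analogous corrected block $\widetilde V_Y''$ vanishes.

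\textbf{Step 3 (block trace).} For $\widetilde U=\begin{pmatrix}0&A\\ {}^tA&B\end{pmatrix}$ the block trace identity gives
\[
tr(D^{-1}\widetilde UD^{-1}\widetilde U)=2\,tr\bigl(t^{-1}\,{}^tA(Y')^{-1}A\bigr)+tr(t^{-1}Bt^{-1}B).
\]
Apply this with $(A,B)=(V_X''',\widetilde V_X'')$ and $(A,B)=(V_Y''',0)$, then multiply by $\tfrac12$. This yields exactly \eqref{eq:normvert}.

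The only genuinely delicate point is Step 2: identifying the correct "corrected" block $\widetilde V''$. One must see that the $Y$-part of the bottom-right block is killed exactly by the vertical condition $dt=0$, while the $X$-part survives as $\widetilde V_X''$. This also explains why the $t^{-1}\widetilde V_X''t^{-1}\widetilde V_X''$ term appears only for $V_X$. Everything else is routine bookkeeping with transposes and cyclic traces.
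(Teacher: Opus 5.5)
Your proposal is correct and is essentially the paper's proof. Your ${}^tQ^{-1}$ is exactly the paper's lower-triangular matrix $P$, so your $\widetilde U={}^tQ^{-1}UQ^{-1}$ is the paper's $PUP^t$; in both arguments, verticality kills the corrected bottom-right block of $PV_YP^t$ before the block trace identity, which you make explicit where the paper just says ``substituting'', yields \eqref{eq:normvert}.
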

\begin{proof}
    On the fiber over $(\tau',t)$ we use the coordinates
    \[
        (X'''+iY''',\,X''),
        \qquad\text{so that}\qquad
        \tau''=X'' + i\bigl(t+{}^tY'''(Y')^{-1}Y'''\bigr).
    \]
    If $V\in T_{\tau}\fF_g(u)$ is vertical then
    \begin{equation}
        V'=0,\quad\text{and}\quad 
        V_Y'' ={}^tV_Y'''(Y')^{-1}Y'''+{}^tY'''(Y')^{-1}V_Y'''.
    \end{equation}
    In order to simplify the norm formula
    \begin{equation}
    \label{eq:normV}
        \|V\|_{\tau}^2=\frac{1}{2}\bigl(tr(Y^{-1}V_XY^{-1}V_X)+tr(Y^{-1}V_YY^{-1}V_Y)\bigr),
    \end{equation}
    we introduce the lower-triangular matrix
    \[
        P:=
        \begin{pmatrix}
            I_{g'} & 0\\
            -{}^tY'''(Y')^{-1} & I_{g''}
        \end{pmatrix}.
    \]
    A direct computation yields the identities
    \begin{equation}\label{eq:PYPt}
        P\,Y\,P^t=\mathrm{diag}(Y',t),
        \qquad
        Y^{-1}=P^t\,\mathrm{diag}((Y')^{-1},t^{-1})\,P.
    \end{equation}
    Moreover, the same change of variables gives
    \begin{equation}\label{eq:PVPt}
        P\,V_X\,P^t=
        \begin{pmatrix}
        0 & V_X'''\\
        {}^tV_X''' & \widetilde V_X''
        \end{pmatrix},
        \qquad
        P\,V_Y\,P^t=
        \begin{pmatrix}
        0 & V_Y'''\\
        {}^tV_Y''' & 0
        \end{pmatrix},
    \end{equation}
    where
    \begin{equation}\label{eq:v.tilde.pp}
        \widetilde V_X'':=
    V_X''-{}^tV_X'''(Y')^{-1}Y'''-{}^tY'''(Y')^{-1}V_X'''\in\mathrm{Sym}(g'',\R).
    \end{equation}
    Substituting \eqref{eq:PYPt} and \eqref{eq:PVPt} into \eqref{eq:normV} yields \eqref{eq:normvert}. 
\end{proof}

\begin{lemma}
\label{lem:vert.norm.est}
    Under the hypothesis of the previous lemma, there exists a constant $C>0$, depending only on the Siegel set data and $\tau'$, such that for every vertical tangent vector $V\in T_{\tau}\fF_g(u)$,
    \begin{equation}
        \label{eq:norm.est}
        \|V\|_{\tau}^2
        \le
        \frac{C}{\lambda_{\min}(t)}\Bigl(\|V_X'''\|_{\mathcal{F}}^2+\|V_Y'''\|_{\mathcal{F}}^2\Bigr)
        +
        \frac{C}{\lambda_{\min}(t)^2}\,\|V_X''\|_{\mathcal{F}}^2;
    \end{equation}
    here $\pi(\tau)=(\tau',t)$, $\lambda_{min}(t)$ denotes the smallest eigenvalue of $t$, and  by $\|\cdot\|_{\mathcal{F}}$ we denote the Frobenius norm.
\end{lemma}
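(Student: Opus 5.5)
We start from the exact identity \eqref{eq:normvert} of Lemma~\ref{lem:vert.norm.exp} and estimate each of its three terms separately with elementary trace inequalities for positive definite matrices. The one inequality we use repeatedly is: for $A$ symmetric positive definite and any real matrix $M$ of compatible size, $\mathrm{tr}(M^tAM)\le \lambda_{\max}(A)\|M\|_{\mathcal F}^2$, together with $\lambda_{\max}(t^{-1})=1/\lambda_{\min}(t)$.

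\textbf{First two terms.} Write $\mathrm{tr}\bigl(t^{-1}\,{}^tV_X'''(Y')^{-1}V_X'''\bigr)=\mathrm{tr}\bigl(t^{-1/2}\,{}^tV_X'''(Y')^{-1}V_X'''\,t^{-1/2}\bigr)$. Apply the inequality twice: first to $(Y')^{-1}$, then to $t^{-1}$. This gives the bound $\lambda_{\min}(Y')^{-1}\lambda_{\min}(t)^{-1}\|V_X'''\|_{\mathcal F}^2$. Since $\tau'$ is fixed in $\fF_{g'}(u)$, the Siegel set conditions $|l_{ij}|<u$, $1<ud_1$, $d_i<ud_{i+1}$ give $\lambda_{\min}(Y')\ge c(u,g')>0$. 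This follows from the Jacobi decomposition: $L'$ and $(L')^{-1}$ have entries bounded in terms of $u$, so $\lambda_{\min}(L'D'L'^t)\gtrsim \min_i d_i\ge 1/u$. The $V_Y'''$ term is handled identically.

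\textbf{Third term.} We have $\tfrac12\mathrm{tr}\bigl(t^{-1}\widetilde V_X''t^{-1}\widetilde V_X''\bigr)\le \tfrac12\lambda_{\min}(t)^{-2}\|\widetilde V_X''\|_{\mathcal F}^2$. By \eqref{eq:v.tilde.pp}, $\|\widetilde V_X''\|_{\mathcal F}\le \|V_X''\|_{\mathcal F}+2\|(Y')^{-1}Y'''\|_{\mathrm{op}}\|V_X'''\|_{\mathcal F}$, so it remains to bound $(Y')^{-1}Y'''$. In the Jacobi decomposition of $Y=LDL^t$, the block ${}^tY'''(Y')^{-1}$ equals the lower-left block $L_{21}L_{11}^{-1}$ of $L$. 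Its entries are bounded by a constant depending only on $u$ and $g$, since $L$ is unipotent lower-triangular with $|l_{ij}|<u$.

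\textbf{Assembling.} The third term yields a contribution $C\lambda_{\min}(t)^{-2}\bigl(\|V_X''\|_{\mathcal F}^2+\|V_X'''\|_{\mathcal F}^2\bigr)$. The piece $\lambda_{\min}(t)^{-2}\|V_X'''\|_{\mathcal F}^2$ must be absorbed into $\lambda_{\min}(t)^{-1}\|V_X'''\|_{\mathcal F}^2$, which requires a lower bound on $\lambda_{\min}(t)$. On the Siegel set we have $t=L_{22}D''L_{22}^t$ with $D''=\mathrm{diag}(d_{g'+1},\dots,d_g)$ and $d_{g'+1}\gtrsim d_1/u^{g'}\ge u^{-g'-1}$. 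Hence $\lambda_{\min}(t)\ge c(u,g)>0$, and $\lambda_{\min}(t)^{-2}\le c^{-1}\lambda_{\min}(t)^{-1}$.

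The main obstacle is keeping every constant uniform in the fiber coordinates. Vertical vectors move $Y'''$, so the bound on $(Y')^{-1}Y'''$ must come from the Siegel set condition $|l_{ij}|<u$ rather than from compactness of the fiber. This requires confirming that the fiber point $\tau$ itself lies in $\fF_g(u)$, which is part of the hypothesis.
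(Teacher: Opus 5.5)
Your proof is correct and follows the paper's route: you apply the same operator-norm trace bounds term by term to \eqref{eq:normvert}, and the same triangle inequality for $\widetilde V_X''$. Your two refinements are worthwhile. Writing ${}^tY'''(Y')^{-1}=L_{21}L_{11}^{-1}$ makes the constant independent of $\tau'$. Your uniform lower bound $\lambda_{\min}(t)\ge c(u,g)$ on the Siegel set is exactly what justifies the paper's terse ``absorbing constants'' step for the $\lambda_{\min}(t)^{-2}\|V_X'''\|_{\mathcal F}^2$ cross term. One small slip: the Siegel inequalities give $d_i>u^{-i}$, so $\min_{i\le g'}d_i\ge u^{-g'}$ rather than $1/u$, which changes nothing.
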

\begin{proof}
    We have that $\|t^{-1}\|_{\mathrm{op}}=\lambda_{\min}(t)^{-1}$, where $\|\cdot\|_{\mathrm{op}}$ denotes the operator norm.
    Using the identity
    \[
        tr(t^{-1}\,{}^tB\,A^{-1}\,B)\le \|t^{-1}\|_{\mathrm{op}}\,\|A^{-1}\|_{\mathrm{op}}\,\|B\|_{\mathcal{F}}^2,
    \]
    the first two terms in \eqref{eq:normvert} satisfy
    \begin{equation}
        \label{eq:trace.norm.iden}
        \,tr\!\bigl(t^{-1}\,{}^tV_X'''(Y')^{-1}V_X'''\bigr)
        \le
        \frac{\|(Y')^{-1}\|_{\mathrm{op}}}{\lambda_{\min}(t)}\,\|V_X'''\|_{\mathcal{F}}^2,
    \end{equation}
    and similarly with $V_Y'''$. For the last term we bound
    \[
    tr(t^{-1}\widetilde V_X''\,t^{-1}\widetilde V_X'')
    \le
    \|t^{-1}\|_{\mathrm{op}}^2\,\|\widetilde V_X''\|_{\mathcal{F}}^2
    =
    \frac{1}{\lambda_{\min}(t)^2}\,\|\widetilde V_X''\|_{\mathcal{F}}^2.
    \]
    Finally, from the definition of $\widetilde V_X''$ we have
    \begin{equation}
        \label{eq:normVtilde}
        \|\widetilde V_X''\|_{\mathcal{F}}
        \le
        \|V_X''\|_{\mathcal{F}}
        +2\,\|Y'''\|_{\mathrm{op}}\,\|(Y')^{-1}\|_{\mathrm{op}}\,\|V_X'''\|_{\mathcal{F}}.
    \end{equation}
    Since the base point $(\tau',t)$ is fixed and $\|L\|_{op}$ is uniformly bounded on a fixed Siegel set we get that $\|Y'''\|_{\mathrm{op}}$ is uniformly bounded on the fiber. Hence, there are constants $C_1$, and $C_2$ such that $\|(Y')^{-1}\|_{\mathrm{op}}= C_1$ and $\|Y'''\|_{\mathrm{op}}\le C_2$, so \eqref{eq:normVtilde} gives
    \begin{equation}
    \label{eq:normVtilderefined}
        \|\widetilde V_X''\|_{\mathcal{F}}
        \le
        \|V_X''\|_{\mathcal{F}}+2\,C_1 C_2\,\|V_X'''\|_{\mathcal{F}}.
    \end{equation}
    Combining \eqref{eq:trace.norm.iden}--\eqref{eq:normVtilderefined} and absorbing constants yields the desired inequality.
\end{proof}

With these results at hand we can give the following point-wise estimate of the fiber diameter

\begin{proposition}\label{pro:fiber.diam.est}
    Let $\tau_n$ be a sequence in $\fF_g(u)\subset \fS_g$ such that $\tau_n\to \tau_\infty'\in \fF_{g'}(u)\subset \fS_{g'}$. This naturally defines a sequence in $\tP_{g'}\backslash\fS_g$, that we keep denoting by $\tau_n$. Let
    \[
        \pi_{g'}(\tau_n)=\big(\tau'_n\,,t_n\big)\in A_{g'}\times\cT{g''}.
    \]
    Then there exists a constant $C>0$ (depending only on the Siegel set and on $\tau'_\infty$) such that
    \begin{equation}\label{eq:diam-lambda}
        \diam\big(\pi_{g'}^{-1}\big(\tau'_n\,,t_n\big)\big) \le\ \frac{C}{\sqrt{\lambda_{\min}(t_n)}}.
    \end{equation}
    Furthermore, there exists $C'>0$ such that
    \begin{equation}\label{eq:diam-dgplus1}
        \diam\big(\pi_{g'}^{-1}\big(\tau'_n\,,t_n\big)\big)\leq \frac{C'}{\sqrt{d_{g'+1}(\tau_n)}}.
    \end{equation}
    In particular, as $\tau_n\to \tau'_\infty$ the diameter of the compact fiber vanishes.
\end{proposition}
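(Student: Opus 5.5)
The fiber over $(\tau'_n,t_n)$ is the compact nilmanifold $\tN_{g'}\backslash N_{g'}$, possibly quotiented further by a finite stabilizer, which only decreases diameter. So the plan is to pick a fundamental domain for the $\tN_{g'}$-action in the fiber coordinates $(X''',Y''',X'')$ used in Lemma~\ref{lem:vert.norm.exp}. Since $\tN_{g'}$ acts by integral translations, this domain is a bounded box:
\begin{itemize}
\item $X'''$ and $X''$ range over cubes of side one;
\item $Y'''$ ranges over the translates $Y'''+Y'm$ (together with the corresponding $\tau'n$ shifts), i.e.\ a parallelepiped spanned by the columns of $Y'_n$ and $\tau'_n$.
\end{itemize}
Because $\tau'_n\to\tau'_\infty$, the entries of $Y'_n$ and $X'_n$ are uniformly bounded. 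Hence any two points of the fiber can be joined, modulo $\tN_{g'}$, by a straight segment in these coordinates whose coordinate velocity has Frobenius norm bounded by a constant $K$ depending only on $\tau'_\infty$ and $u$.

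Along such a segment the tangent vector is vertical, so Lemma~\ref{lem:vert.norm.est} applies. One must check that the constant $C$ there is uniform along the whole segment and in $n$. The inputs are $\|(Y')^{-1}\|_{\mathrm{op}}$, which is controlled because $Y'_n\to Y'_\infty>0$, and $\|Y'''\|_{\mathrm{op}}$, which is bounded on the chosen box. This gives
\[
\|V\|^2\le \frac{C}{\lambda_{\min}(t_n)}K^2+\frac{C}{\lambda_{\min}(t_n)^2}K^2 .
\]
Since $\lambda_{\min}(t_n)\to\infty$, we have $\lambda_{\min}(t_n)\ge 1$ for $n$ large, so the second term is dominated by the first. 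Integrating over the unit-time segment yields \eqref{eq:diam-lambda}, after enlarging $C$ to cover the finitely many initial $n$.

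For \eqref{eq:diam-dgplus1}, compare $\lambda_{\min}(t_n)$ with $d_{g'+1}(\tau_n)$. The key observation is that the Jacobi decomposition is compatible with the block (Schur complement) structure. If $Y=LDL^t$ with $L$ unipotent lower triangular, write $L=\begin{pmatrix}L_1&0\\ L_{3}&L_2\end{pmatrix}$ and $D=\mathrm{diag}(D',D'')$. A direct computation then gives $t=Y''-{}^tY'''(Y')^{-1}Y'''=L_2D''L_2^t$. The entries of $L_2$ and $L_2^{-1}$ are bounded by constants depending on $u$. Combined with $d_{g'+1}\le u d_{g'+2}\le\cdots$ from the Siegel set, this gives
\[
\lambda_{\min}(t)\ge \|L_2^{-1}\|_{\mathrm{op}}^{-2}\min_i d_{g'+i}\ge c(u)\, d_{g'+1},
\]
and hence \eqref{eq:diam-dgplus1}.

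The vanishing statement then follows from \eqref{eq:sieg.set.direc}, which gives $d_{g'+1}(\tau_n)\to\infty$.

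The main obstacle is the uniformity of the fundamental domain for the $Y'''$ direction (and the twisted Heisenberg structure of $N_{g'}$, where $b$ interacts with $m,n$). The $\tN_{g'}$-action mixes $X''$ with $X''',Y'''$ through terms like ${}^tm\tau'm$. I would handle this by first translating $Y'''$ into the bounded parallelepiped using $m$, then $X'''$ using $n$, and finally $X''$ using $b$. At each step the moved coordinates stay bounded in terms of $\tau'_n$, so the straight-segment velocity bound $K$ survives.
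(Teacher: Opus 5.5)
Your proposal is correct and follows essentially the paper's route: put the fiber coordinates in a bounded fundamental domain, join points by a vertical linear path so that Lemma~\ref{lem:vert.norm.est} applies with uniform constants, integrate, and then use $t=L''D''\,{}^tL''$ with the Siegel inequalities to get $\lambda_{\min}(t)\ge c\,d_{g'+1}$. The differences are minor. You build the $\tN_{g'}$-fundamental domain explicitly from the affine action, whereas the paper just takes the slice of $\fF_g(u)$ over $(\tau',t)$; in that construction, columnwise $\tau'''$ is reduced modulo $\Z^{g'}+\tau'_n\Z^{g'}$, i.e.\ $Y'''$ modulo $Y'_n\Z^{g'}$ and then $X'''$ modulo $\Z^{g'}$, and $X''$ is reduced by symmetric integral $b$. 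You also state explicitly the lower bound $\lambda_{\min}(t_n)\ge 1$ needed to absorb the $\lambda_{\min}^{-2}$ term, which the paper uses tacitly.
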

\begin{proof}
Without loss of generality we work with the Siegel set description of the quotients, i.e.,
\[
    \pi_{g}:\fF_g(u)\longrightarrow\fF_{g'}(u)\times\fF^{+}_{g''}(u).
\]
Fix $(\tau',t)\in \fF_{g'}(u)\times\fF^{+}_{g''}(u)$ and let 
\[
    (X'''+iY''',\,X''),
    \qquad\text{so that}\qquad
    \tau''=X'' + i\bigl(t+{}^tY'''(Y')^{-1}Y'''\bigr).
\]
be the usual coordinates on the fiber over $(\tau',t)$. 

It is clear from the definiton of Siegel set that $\pi^{-1}_{g}(\tau',t)\cap \fF_g(u)$  gives a lattice fundamental domain of uniformly bounded size for the compact nilmanifold, possibly with finite orbifold structure, appearing as the fiber of the fibration\footnote{Recall that given a point $(\tau',t)$ in the base of the fibration $\pi_g: \tP_{g'}\backslash \fS_g\to A_{g'}\times\cT{g''}$ the fiber over it is homeomorpich to the (orbifold) compact nimanifold given by $\tL_{g',(\tau',t)}\backslash(\tN_{g'}\backslash N_{g'})$, where $\tL_{g',(\tau',t)}$ is the isotropy group of $(\tau',t)$}. Thus, there exists a constant $R>0$ (depending only on the chosen Siegel set) such that for any two points $\tau_p,\tau_q\in \pi_{g'}^{-1}(\tau',t)\cap\fF_g(u)$ the fiber-coordinate differences are bounded by $R$ in Frobenius norm, namely
\begin{equation}\label{eq:bounded-increments}
\|\Delta X'''\|_{\mathcal {F}}\le R,\qquad
\|\Delta Y'''\|_{\mathcal {F}}\le R,\qquad
\|\Delta X''\|_{\mathcal {F}}\le R.
\end{equation}

Given such $\tau_p,\tau_q$, connect them by a piecewise smooth path $\gamma(s)$ contained in the fiber, chosen so that $(\tau',t)$ remains constant and $X'''$ varies linearly on the first segment and, respectively, $Y'''$ and $X''$ on the second and third. Then $\dot\gamma(s)$ is vertical, and along each linear segment
\[
\|V_X'''\|_{\mathcal{F}}=\|\Delta X'''\|_{\mathcal{F}},\qquad
\|V_Y'''\|_{\mathcal{F}}=\|\Delta Y'''\|_{\mathcal{F}},\qquad
\|V_X''\|_{\mathcal{F}}=\|\Delta X''\|_{\mathcal{F}},
\]
and consequently
\[
\|V_X'''\|_{\mathcal{F}},\;\|V_Y'''\|_{\mathcal{F}},\;\|V_X''\|_{\mathcal{F}}\;\le R.
\]
Applying Lemma~\ref{lem:vert.norm.est} to $V=\dot\gamma(s)$ yields
\[
\|\dot\gamma(s)\|^2
\le
\frac{C_0}{\lambda_{\min}(t)}\Bigl(\|V_X'''\|_{\mathcal{F}}^2+\|V_Y'''\|_{\mathcal{F}}^2\Bigr)
+
\frac{C_0}{\lambda_{\min}(t)^2}\,\|V_X''\|_{\mathcal{F}}^2
\le
\frac{C_1}{\lambda_{\min}(t)},
\]
for a constant $C_1$ depending only on the Siegel set and $\tau'$. In particular, $\|\dot\gamma(s)\|\le C_2/\sqrt{\lambda_{\min}(t)}$. Integrating along $\gamma$ gives
\[
d(\tau_p,\tau_q)\le \mathrm{length}(\gamma)
\le \frac{C_{\tau'}}{\sqrt{\lambda_{\min}(t)}},
\]
uniformely over all such $\tau_p,\tau_q\in \pi_{g'}^{-1}(\tau',t)\cap \fF_g(u)$, with $C_{\tau'}$ depending only on the Siegel set and $\tau'$. Taking the supremum over $\tau_p,\tau_q$ yields 
\[
\diam\big(\pi_{g'}^{-1}\big(\tau'\,,t\big)\big) \le\ \frac{C_{\tau'}}{\sqrt{\lambda_{\min}(t)}}.
\]
Then, applying this to the sequence $(\tau'_n,t_n)$ and noticing that $\tau'_n\to \tau_\infty$ we get \eqref{eq:diam-lambda}.

Finally, \eqref{eq:diam-dgplus1} follows from the fact that, given a point $(\tau',t)$ and any $\tau\in \pi^{-1}_{g'}(\tau',t)$, the Siegel set inequalities $d_{i}~\leq~ u \,d_{i+1}$ imply the existence of $c>0$ such that
\begin{equation}\label{eq:lambdamin-lower}
    \lambda_{\min}(t)\ \ge\ c\, d_{g'+1}(\tau).
\end{equation}
To see this, write the Jacobi's decomposition $Y=LDL^t$ in block form as
\begin{equation}
       \begin{pmatrix}
            L'&0\\
            L'''&L''
        \end{pmatrix}
        \begin{pmatrix}
            D'&0\\
            0&D''
        \end{pmatrix}
        \begin{pmatrix}
            ^tL'&^tL'''\\
            0&^tL''
        \end{pmatrix}.
\end{equation}
This gives $t=L''\,D''\,{}^tL''$ with $D''=\mathrm{diag}(d_{g'+1},\cdots,d_{g})$. Let $x \in \R^{g''}$. Then
\[
{}^tx\, t\, x = {}^t\bigl({}^tL'' x\bigr)\,D''\,\bigl({}^tL'' x\bigr) \ge d_{min}\,\|{}^tL'' x\|^2.
\]
Using $\|{}^tL'' x\|\ge \frac{\|x\|}{\|(L'')^{-1}\|_{\mathrm{op}}}$ and taking the infimum over $\|x\|=1$ yields
\[
\lambda_{min}(t) \ge \frac{d_{min}}{\|(L'')^{-1}\|_{\mathrm{op}}^2},
\]
where $d_{min}=\mathrm{min}\{d_i\,|\, g'+1\leq i \leq g \}$. Thus \eqref{eq:lambdamin-lower} follows from $\|(L'')\|_{\mathrm{op}}$ being uniformly bounded on the Siegel set (and consequently $\|(L'')^{-1}\|_{\mathrm{op}}$), together with $d_{min} \geq d_{g'+1}/\mathrm{max}\{1,u^{g''-1}\}$.
\end{proof}

Finally we proceed with the proof of the main theorem

\begin{proof}[Proof of Theorem \ref{thm:main.second.version}]
    First, observe that for every $n$ and every $R>0$,
    \[
        \pi_{g'}\bigl(B(\tau_n,R)\bigr)=B\bigl(\pi_{g'}(\tau_n),R\bigr).
    \]
    Indeed, by Lemma \ref{lem:riem.subm.gen} the map $\pi_{g'}$ is $1$-Lipschitz, hence for any $\tau\in B(\tau_n,R)$ we have
    \[
        d\bigl(\pi_{g'}(\tau),\pi_{g'}(\tau_n)\bigr)\le d(\tau,\tau_n)\le R,
    \]
    which gives the inclusion $\pi_{g'}(B(\tau_n,R))\subset B(\pi_{g'}(\tau_n),R)$. For the reverse inclusion, fix $(\tau',t)\in B(\pi_{g'}(\tau_n),R)$ and let $\beta$ be a minimizing geodesic in the base joining $\pi_{g'}(\tau_n)$ to $(\tau',t)$. Since $\pi_{g'}$ is a Riemannian submersion, $\beta$ admits a horizontal lift $\tilde\beta$ starting at $\tau_n$, and the lift preserves length. In particular,
    \[
        \mathrm{leng}(\tilde\beta)=\mathrm{leng}(\beta) = d\bigl(\pi_{g'}(\tau_n),(\tau',t)\bigr)\le R,
    \]
    so $\tilde\beta(1)\in B(\tau_n,R)$ and $\pi_{g'}(\tilde\beta(1))=(\tau',t)$. This proves $B(\pi_{g'}(\tau_n),R)\subset \pi_{g'}(B(\tau_n,R))$, hence the desired equality.

    Next, we quantify the distortion of distances in terms of the fiber diameter. Define
    \[
        \delta(\tau_n;R):=\sup\Bigl\{\operatorname{diam}\bigl(\pi^{-1}_{g'}(\tau',t)\bigr)\ \Big|\ d\bigl(\pi_{g'}(\tau_n),(\tau',t)\bigr)\le R\Bigr\}.
    \]
    Then for all $\tau_p,\tau_q\in B(\tau_n,R)$, the usual argument using the $1$-Lipschitz property of $\pi_{g'}$ and a path connecting $\tau_p$ and $\tau_q$ consisting of an horizontal lift of a path in the base, concatenated with a vertical path inside the fiber over $\pi_{g'}(\tau_q)$, gives
    \[
        0\le d(\tau_p,\tau_q)-d\bigl(\pi_{g'}(\tau_p),\pi_{g'}(\tau_q)\bigr)\le \delta(\tau_n;R).
    \]
    Consequently,
    \[
        d_{GH}\Bigl(B(\tau_n,R),\, B(\pi_{g'}(\tau_n),R)\Bigr)\le \frac12\,\delta(\tau_n;R).
    \]

    It remains to estimate $\delta(\tau_n;R)$, i.e. to control fiber diameters uniformly over the base ball $B(\pi_{g'}(\tau_n),R)$. Fix $(\tau',t)\in B(\pi_{g'}(\tau_n),R)$. We claim that
    \begin{equation}
    \label{eq:lmin-ball}
    e^{-\sqrt{2}R}\,\lambda_{\min}(t_n)\leq \lambda_{\min}(t)\leq e^{\sqrt{2}R}\,\lambda_{\min}(t_n).
    \end{equation}
    In particular, there exists a constant $C$ (depending only on the chosen Siegel set data and on $\tau'_\infty$) such that
    \begin{equation}
        \label{eq:delta.fiber.unif}
        \delta(\tau_n;R)\leq \frac{C\,e^{R/\sqrt{2}}}{\sqrt{\lambda_{\min}(t_n)}}.
    \end{equation}
    Granting \eqref{eq:lmin-ball}, the estimate \eqref{eq:delta.fiber.unif} follows immediately from Proposition \ref{pro:fiber.diam.est}, Equation \eqref{eq:diam-dgplus1}, together with the definition of $\delta(\tau_n;R)$. Moreover, combining this with \eqref{eq:lambdamin-lower} and the fact that $\tau_n\to\tau'_\infty$ forces $d_{g'+1}(\tau_n)\to\infty$, one obtains \eqref{eq:GH.dis.balls.R}.

    We now prove \eqref{eq:lmin-ball}. Recall the standard distance formula on $\sym(g'',\mathbb R)$:
    \[
        d(t_n,t)^2=\frac12\sum_i \log^2(\mu_i),
    \]
    where $\mu_1\le \cdots\le \mu_{g''}$ are the eigenvalues of
    \[
        A:=t_n^{-1/2}\,t\,t_n^{-1/2}.
    \]
    Since $(\tau',t)\in B(\pi_{g'}(\tau_n),R)$, we have $d(t_n,t)\le R$, and hence
    \[
    \frac{1}{\sqrt2}\,|\log \mu_i|\le R\qquad\text{for all }i,
    \]
    so $e^{-\sqrt2 R}\le \mu_i\le e^{\sqrt2 R}$. In particular $\mu_{\min}(A)\ge e^{-\sqrt2 R}$ and $\mu_{\max}(A)\le e^{\sqrt2 R}$. Using the inequalities
    \[
    \lambda_{\min}(t)\ \ge\ \lambda_{\min}(t_n)\,\mu_{\min}(A),
    \qquad\text{and}\qquad
    \lambda_{\min}(t)\ \le\ \lambda_{\min}(t_n)\,\mu_{\max}(A),
    \]
    we obtain exactly \eqref{eq:lmin-ball}.

    Finally, observe that the condition in Equation \eqref{eq:lim.didj} can be precisely rephrased as saying that there exist a point $\overline t_\infty \in \tL_{g'l}\backslash\sym(g'',\R)/\R_{>0}$ such that the corresponding sequence $\overline t_n\to \overline t_\infty$. To see this, just notice that 
    \begin{equation}
        \begin{aligned}
            \overline{t}_\infty=\lim_{n\to \infty}\frac{1}{d_{g}(\tau_n)}L''_nD''_n{}^tL''_n=&\lim_{n\to \infty}\mathrm{diag}\Biggl(\frac{d_{g'+1}(\tau_n)}{d_{g}(\tau_n)},\cdots,\frac{d_{g-1}(\tau_n)}{d_{g}(\tau_n)},1 \Biggr)\\[5pt]
            =&\:\mathrm{diag}\bigl(k_{g'+1,g},\cdots,k_{g-1,g},1).
        \end{aligned}
    \end{equation}
    Thus, the pointed Gromov--Hausdorff convergence in Equation \eqref{eq:pGH.conve.Ag} follows by remembering that for any $R>0$ and for $n$ sufficiently large $B(\tau_n,R)\subset A_g$ can be assumed to be in the region where $\Gamma\sim \tP_{g'}$ and consequently we can use Equation \eqref{eq:GH.dis.balls.R} together. This finishes the proof 
\end{proof}

\subsection{Final remarks}

The above results establish an explicit connection between the Weil--Petersson metric asymptotics and the geometric compactifications of $A_g$ in terms of Gromov--Hausdorff limits of the (canonical) flat K\"ahler metric along degenerating sequences of abelian varieties. Furthermore, they show that at infinity the moduli space collapses to a lower-dimensional space and in particular such space does not longer have the structure of a complex algebraic variety. This result, although enigmatic, is consistent with the expectation that any \emph{geometric compactification} of the moduli space of polarized Calabi--Yau manifolds must account for the fact that, due to collapsing phenomena, the limiting objects are in general not algebraic varieties. 

However, let us emphasize that the limiting Weil--Petersson geometry of $A_g$ is not at all detached from an algebro-geometric interpretation. First, the holomorphic factor in the limit, namely $(A_{g'},\g'_{\WP})$, can be naturally identified with a boundary stratum of the Satake--Baily--Borel compactification $\bar A_g^{\sbb}$ of $A_g$. Which by construction is the minimal projective compactification of $A_g$. On the other hand, the non-holomorphic factor $(\cT{g''},\g''_{\trWP})$, that we will refer to as the tropical factor following Odaka, can be naturally identified with a stratum of the dual intersection complex of the boundary divisor of any toroidal compactification of $A_g$ (This in the sense explained by \cite{Odaka2019}). Moreover, from Theorem \ref{thm:Oda.holo.family} (Theorem $3.1$ in \cite{Odaka2019}) one can see that the holomorphic factor $A_{g'}$ and the tropical factor $\cT{g''}$ can be thought as parameterizing respectively the abelian and the compact torus part in a semi-abelian degeneration. 

The proof of Theorem~\ref{thm:main.theorem} relies on the fact that $A_g$ admits an explicit realization as a locally symmetric space (although the final theorem gives a modular interpretation which is independent from that). In particular, our approach is not specific to abelian varieties and it applies to moduli spaces whose global structure is governed by an arithmetic quotient of a symmetric space, such as the moduli spaces of polarized K3 surfaces or general polarized hyperähler manifolds. Let us briefly outline here this perspective and describe on-going work toward these extensions.

It is well known that the moduli space $\CMcal{F}_{2d}$ of (polarized) K3 surfaces of degree $2d$ can be realized as a locally symmetric space 
\begin{equation}
    \CMcal{F}_{d}=\Gamma_{2d}\backslash\CMcal{D}_{2d},
\end{equation}
where $\CMcal{D}_{2d}$ is a Hermitian symmetric domain of type $\mathrm{IV}$ and $\Gamma_{2d}$ is an arithmetic group acting on $\CMcal{F}_{2d}$. It is not difficult to see that the constructions introduced in Section \ref{subsec:boun.comp.Sg} generalized accordingly to $\CMcal{D}_{2d}$. In particular, we have that there are only two types of boundary components $E\subset \overline{\CMcal{D}}_{2d}$ and associated to each (rational) maximal parabolic subgroup $P_E$ we have the corresponding horospherical decomposition of $\CMcal{D}_{2d}$. In on-going work we show that our result in Theorem \ref{thm:main.second.version} extend to the case of $\CMcal{F}_{2d}$ and, similarly, the picture connects naturally with the works of Odaka and Oshima concerning diameter scale limits, as well as the differential geometric meaning for the Satake-Baily-Borel in terms of volume limits as suggested in \cite{spotti25} Conjectural picture $2$.

Finally, let us conclude with two more speculative comments. Firstly, note that along the collapsing process of the Weil--Petersson metric, important metric information is lost. For instance, the limiting asympotitic alone is not sufficient to reconstruct the full Weil--Petersson metric at infinity. For example, the baby case of  moduli of flat metrics with cone angle on the projective line (see, e.g., \cite{spotti25} Section $2$) suggests that information of \textit{higher scales bubbling} may be important as well: in such case the Weil--Petersson cusps degenerate to a real line, but the full metric can be recover as a Calabi's ansatz from a collapsed complex link, see \cite{BS19} Proposition $5$, which is naturally identified with moduli of non-collapsing minimal cylindrical bubbles. Secondly, note that a very recent breakthrough \cite{BFMT2025} has constructed Satake-Baily-Borel type compactification for general polarized Calabi-Yau varieties. In particular, it is natural to expect that similar holomorphic plus tropical splitting phenomena as we have described can occur in much greater generality for the Weil--Petersson asymptotic: the holomorphic part should relate to the Weil--Petersson metric generated by moduli of log canonical centers in the above generalized Satake-Baily-Borel type compactification, while a more mysterious tropical part should relate to aspects of collapsing at fixed diameter scale/dual complexes of degenerations (see, for instance, the results and conjectural pictures in \cite{Li25,Odaka22,spotti25,SZ2024}).

\printbibliography
\end{document}